\DeclareMathAlphabet{\mathpzc}{OT1}{pzc}{m}{it}
\DeclareMathOperator{\supp}{supp}
\newcommand{\dist}{ {\textup{\textsf{d}}}_{z} }
\newcommand{\Sides}{\mathscr{S}}
\newcommand{\Ne}{\mathcal{N}}
\newcommand{\E}{\mathscr{E}}
\newcommand{\TheTitle}{A posteriori error estimates for the stationary Navier Stokes equations with Dirac measures}
\newcommand{\ShortTitle}{A posteriori error estimates with Dirac measures}
\newcommand{\TheAuthors}{A. Allendes, E.~Ot\'arola, A. J.~Salgado}
\headers{\ShortTitle}{\TheAuthors}
\title{{\TheTitle}\thanks{AA has been partially supported by CONICYT through FONDECYT project 1170579. EO has been partially supported by CONICYT through FONDECYT project 11180193. AJS has been partially supported by NSF grant DMS-1720213.}}
\author{
  Alejandro Allendes\thanks{Departamento de Matem\'atica, Universidad T\'ecnica Federico Santa Mar\'ia, Valpara\'iso, Chile.
    (\email{alejandro.allendes@usm.cl}, \url{http://aallendes.mat.utfsm.cl/}).}
    \and
  Enrique Ot\'arola\thanks{Departamento de Matem\'atica, Universidad T\'ecnica Federico Santa Mar\'ia, Valpara\'iso, Chile.
    (\email{enrique.otarola@usm.cl}, \url{http://eotarola.mat.utfsm.cl/}).}
  \and
  Abner J.~Salgado\thanks{Department of Mathematics, University of Tennessee, Knoxville, TN 37996, USA.
    (\email{asalgad1@utk.edu}, \url{http://www.math.utk.edu/\string~abnersg})}
}
\date{Submitted \today.}
\begin{document}

\maketitle

\begin{abstract}
In two dimensions, we propose and analyze an a posteriori error estimator for finite element approximations of the stationary Navier Stokes equations with singular sources on Lipschitz, but not necessarily convex, polygonal domains. Under a smallness assumption on the continuous and discrete solutions, we prove that the devised error estimator is reliable and locally efficient. We illustrate the theory with numerical examples.
\end{abstract}

\begin{keywords}
A posteriori error estimates, Navier Stokes equations, Dirac measures, Muckenhoupt weights.
\end{keywords}

% REQUIRED
\begin{AMS}
35Q35,         % PDEs in connection with fluid mechanics
35Q30,         % Navier-Stokes equations 
35R06,         % Partial differential equations with measure
76Dxx,         % Incompressible viscous fluids
65N15,         % Error bounds
65N30,         % Finite elements, Rayleigh-Ritz and Galerkin methods, finite methods
65N50.         % Mesh generation and refinement
\end{AMS}

\section{Introduction}
\label{sec:intro}

Let $\Omega \subset \R^2$ be an open and bounded domain with Lipschitz boundary $\partial \Omega$. In this work we will be interested in the design and analysis of a posteriori error estimates for finite element approximations of the stationary Navier Stokes problem
\begin{equation}
\label{eq:NSEStrong}
    -\Delta \ue + (\ue \cdot \GRAD) \ue + \GRAD \pe = \bF \delta_z \text{ in } \Omega, \qquad
    \DIV \ue = 0 \text{ in } \Omega, \qquad
    \ue = \boldsymbol0 \text{ on } \partial\Omega,
\end{equation}
where $\delta_z$ corresponds to the Dirac delta supported at the interior point $z \in \Omega$ and $\bF \in \mathbb{R}^2$. Here, $\bu$ represents the velocity of the fluid, $\pe$ the pressure, and  $\bF \delta_z$ is an externally applied force. Notice that, for simplicity, we have taken the viscosity to be equal to one. 

Since the stationary Navier Stokes equations model the motion of a stationary, incompressible, Newtonian fluid, it is no surprise that their analysis and approximation, at least in energy--type spaces, is very well developed; see, for instance, \cite{MR1846644,MR851383,MR2986590,MR1928881,MR2258988} for an account of this theory.

On the other hand, there are situations where one wishes to allow this model to be driven by singular forces, like in \eqref{eq:NSEStrong}. As a first example of this we mention \cite{Lacouture2015187}, where the linear version of \eqref{eq:NSEStrong} is considered, and it is argued that it can be used to model the movement of active thin structures in a viscous fluid. A numerical scheme is proposed, but no complete analysis of this method is provided. Local error estimates, away from the support of the delta, were later provided in \cite{MR3854357}.

A second example comes from PDE--constrained optimization (optimal control). Reference \cite{MR3936891} sets up a problem where the state is governed by the stationary Navier Stokes equations, but with a forcing (control) that is measure valued, like in \eqref{eq:NSEStrong}. The motivation behind this is what the authors denote \emph{sparsity} of the control, meaning that its support is small, even allowing it to have Lebesgue measure zero. The analysis of \cite{MR3936891} assumes that the domain has $C^2$ boundary, and seeks for a solution to \eqref{eq:NSEStrong} in $\bW^{1,q}_0(\Omega) \times L^q(\Omega)/\R$ with $q \in [4/3,2)$. In this setting a complete existence theory for the state is provided, and the optimization problem is analyzed. Necessary and sufficient optimality conditions are deduced. This work, however, is not concerned with approximation.

In this work we continue our program aimed at developing numerical methods for models of fluids under singular forces. The guiding principle that we follow is that by introducing a weight, and working in the ensuing weighted function spaces, we can allow for data that is singular, so that \eqref{eq:NSEStrong} fits our theory. We immediately must comment that the literature already presents an analysis of the stationary Navier Stokes equations on Muckenhoupt weighted spaces; see \cite{MR2574796}. This paper however, requires the domain to be $C^{1,1}$, which is not suitable for a finite element approximation. We, in contrast, assume only that the domain is Lipschitz. In \cite{OS:17infsup} we developed existence and uniqueness for the Stokes problem over a reduced class of weighted spaces, see Definition~\ref{def:ApOmega} below. The numerical analysis of this linear model is presented in \cite{DuranOtarolaAJS,MR3892359}, where a priori and a posteriori, respectively, error analyses are discussed. The nonlinear case, that is \eqref{eq:NSEStrong} is considered in \cite{NSE:2020} where existence and uniqueness for small data, and in the same functional setting, is proved. In the setting of uniqueness, an a priori error analysis for a numerical scheme is also developed. This brings us to this work and its contributions. The solution to \eqref{eq:NSEStrong}, because of the singular data, is not expected to be smooth, and thus adaptive methods must be developed to efficiently approximate it. Our goal here is to develop and analyze a reliable and efficient a posteriori error estimator, and show its performance when used in a standard adaptive procedure.

To achieve these goals we organize our presentation as follows. We set notation in section \ref{sec:prelim}, where we also recall the definition of Muckenhoupt weights and introduce the weighted spaces we shall work with. In section \ref{sec:Navier_Stokes}, we introduce a suitable weak formulation
for problem (3) in weighted spaces and review existence and uniqueness results for small data. Section
\ref{sec:inf_sup_stable_a_posteriori} presents basic ingredients of finite element methods. Section \ref{sec:a_posteriori} is one of the highlights of our work. In section \ref{subsec:Ritz} we introduce a Ritz projection of the residuals and prove, in section \ref{subsec:upper_bound}, that the energy norm of the error can be bounded in terms of the the energy norm of the Ritz projection. We thus propose, in section \ref{subsec:residual--type}, an a posteriori error estimator for inf--sup stable finite element approximations of problem \eqref{eq:NSEStrong}; the devised error estimator is proven to be locally efficient and
globally reliable. We conclude, in Section \ref{sec:numerics}, with a series of numerical experiments that illustrate our theory.

\section{Notation and preliminaries}
\label{sec:prelim}

Let us set notation and describe the setting we shall operate with.  Throughout this work $\Omega\subset\mathbb{R}^2$ is an open and bounded polygonal domain with Lipschitz boundary $\partial\Omega$. Notice that we do not assume that $\Omega$ is convex. If $\mathcal{W}$ and $\mathcal{Z}$ are Banach function spaces, we write $\mathcal{W} \hookrightarrow \mathcal{Z}$ to denote that $\mathcal{W}$ is continuously embedded in $\mathcal{Z}$. We denote by $\mathcal{W}'$ and $\|\cdot\|_{\mathcal{W}}$ the dual and the norm of $\mathcal{W}$, respectively.

For $E \subset \bar\Omega$ of finite Hausdorff $i$-dimension, $i \in \{1,2\}$, we denote its measure by $|E|$. If $E$ is such a set and $f : E \to \R$ we denote its mean value by
\[
 \fint_E f  = \frac{1}{|E|}\int_{E} f .
\]

The relation $a \lesssim b$ indicates that $a \leq C b$, with a constant $C$ that depends neither on $a$, $b$ nor the discretization parameters. The value of $C$ might change at each occurrence.

\subsection{Weights} 
A notion which will be fundamental for further discussions is that of a weight. By a weight we mean a locally integrable, nonnegative function defined on $\mathbb{R}^2$. Of particular interest in our constructions will be weights that belong to the Muckenhoupt class $A_2$ \cite{MR1800316}, which consist of all weights $\omega$ such that
\begin{equation}
\label{A_pclass}
\left[ \omega \right]_{A_2} := \sup_{B} \left( \fint_{B} \omega \right) \left( \fint_{B} \omega^{-1} \right)  < \infty, 
\end{equation}
where the supremum is taken over all balls $B$ in $\R^2$. For $\omega \in A_2$ the quantity $[\omega]_{A_2}$ is the Muckenhoupt characteristic of $\omega$. We refer the reader to \cite{MR1800316,MR2491902,NOS3,MR1774162} for basic facts about the class $A_2$. An $A_2$ weight which will be essential for our subsequent developments is the following. Let $z \in \Omega$ and $\alpha \in (-2,2)$. Then
\begin{equation}
\label{distance_A2}
\dist^\alpha(x) = |x-z|^{\alpha} \in A_2.
\end{equation}

An important property of the weight $\dist^\alpha$ is that there is a neighborhood of $\partial \Omega$ where $\dist^\alpha$ is strictly positive, and continuous. This observation motivates us to define a restricted class of Muckenhoupt weights \cite[Definition 2.5]{MR1601373}.

\begin{definition}[class $A_2(\Omega)$]
\label{def:ApOmega}
Let $\Omega \subset \R^2$ be a Lipschitz domain. We say that $\omega \in A_2$ belongs to $A_2(\Omega)$ if there is an open set $\calG \subset \Omega$, and $\varepsilon, \omega_l >0$ such that:
\[
\{ x \in \Omega: \mathrm{dist}(x,\partial\Omega)< \varepsilon\} \subset \calG,
\qquad
\omega_{|\bar\calG} \in C(\bar\calG),
\qquad
\omega_l \leq \omega(x) \quad \forall x \in \bar\calG.
\]
\end{definition}

As we have mentioned in the introduction, what allows us to consider rough forcings in \eqref{eq:NSEStrong} is the use of weights and weighted spaces, as we will define below. We must note, however, that since we are not assuming our polygonal domain $\Omega$ to be convex, the same considerations given in the counterexample of \cite[page 2]{DO:17} show that we cannot work with general weights, and we cannot allow our forcings to have singularities near the boundary. This is the importance of the class $A_2(\Omega)$ of Definition~\ref{def:ApOmega}.

\subsection{Weighted spaces} 
Let $E$ be an arbitrary domain in $\R^2$ and $\omega \in A_2$. We define $L^2(\omega,E)$ as the space of Lebesgue measurable functions in $E$ such that 
\[
 \| v \|_{L^2(\omega,\Omega)} = \left(\int_{E} \omega |v|^2 \right )^{\frac{1}{2}} < \infty.
\]
We define the weighted Sobolev space $H^1(\omega,E)$ as the set of functions $v \in L^2(\omega,E)$ such that, for every multiindex $\alpha \in \mathbb{N}_0^2$ with $|\alpha| \leq 1$ we have that the distributional derivatives $D^{\alpha} v \in L^2(\omega,E)$. We endow $H^1(\omega,E)$ with the norm
\begin{equation}
\label{eq:norm}
 \| v \|_{H^1(\omega,E)}:= \left(  \| v \|_{L^2(\omega,E)}^2 +  \| \nabla v \|_{L^2(\omega,E)}^2  \right)^{\frac{1}{2}}.
\end{equation}
We define $H_0^1(\omega,E)$ as the closure of $C_0^{\infty}(E)$ in $H^1(\omega,E)$. We notice that, owing to a weighted Poincar\'e inequality \cite{MR643158}, over $H_0^1(\omega,E)$ the seminorm $\| \nabla v \|_{L^2(\omega,E)}$ is equivalent to the norm defined in \eqref{eq:norm}.

Spaces of vector valued functions will be denoted by boldface, that is 
\[
 \bH_0^1(\omega,E) = [ H_0^1(\omega,E) ]^{2}, \quad \| \GRAD \bv \|_{\bL^2(\omega,E)}:= \left( \sum_{i=1}^{2} \| \nabla v_i \|^2_{L^2(\omega,E)} \right)^{\frac{1}{2}},
\]
where $\bv = (v_1,v_2)^\intercal$.

The following product spaces with the weight $\dist^{\alpha}$ will be of particular importance. For $\alpha \in (-2,2)$, we define
\begin{equation}
\label{XandY}
\mathcal{X}(E) = \bH^{1}_0(\dist^{\alpha},E) \times L^2(\dist^{\alpha},E)/ \mathbb{R}, \quad
\mathcal{Y}(E) = \bH^{1}_0(\dist^{-\alpha},E) \times L^2(\dist^{-\alpha},E)/ \mathbb{R},
\end{equation}
which we endow with standard product space norms. 
When $E = \Omega$, and in order to simplify the presentation of the material, we write $\mathcal{X} = \mathcal{X}(\Omega)$ and $\mathcal{Y} = \mathcal{Y}(\Omega)$.

\section{The stationary Navier Stokes equations under singular forcing}
\label{sec:Navier_Stokes}

For $\alpha \in (-2,2)$, we define the bilinear forms
\begin{equation}
\label{eq:defofforma}
\begin{aligned}
a : \bH^1_0(\dist^{\alpha},\Omega) \times \bH^1_0(\dist^{-\alpha},\Omega) \to \R, 
\quad
a(\bw,\bv) := \int_\Omega \nabla \bw : \nabla \bv,
\end{aligned}
\end{equation}
and
\begin{equation}
\label{eq:defofformb}
 \begin{aligned}
b_\pm &: \bH^1_0(\dist^{\pm\alpha},\Omega)\times L^2(\dist^{\mp\alpha},\Omega) \to \R, 
\quad
b_\pm(\bv,q) :=  - \int_{\Omega} q \DIV \bv.
\end{aligned}
\end{equation}
We also define the trilinear form
\begin{equation}
\label{eq:defofformc}
 \begin{aligned}
c : [\bH^1_0(\dist^{\alpha},\Omega) ]^2 \times \bH^1_0(\dist^{-\alpha},\Omega)  \to \R, 
\quad
c(\ue, \bw; \bv) :=  - \int_{\Omega} \ue \otimes \bw : \GRAD \bv.
\end{aligned}
\end{equation}

The results of \cite{OS:17infsup} yield an inf--sup condition for the bilinear form $a$ on weighted spaces, i.e., we have
\begin{multline}
\label{eq:infsup}
  \inf_{0 \neq \bv \in \bH^1_0(\dist^{\alpha},\Omega)} \sup_{0 \neq \bw \in \bH^1_0(\dist^{-\alpha},\Omega)} \frac{ a(  \bv,  \bw ) }{ \| \nabla \bv\|_{\bL^2(\dist^{\alpha},\Omega)} \| \nabla \bw \|_{\bL^2(\dist^{-\alpha},\Omega)} } 
  = 
  \\
    \inf_{0 \neq \bw \in \bH^1_0(\dist^{-\alpha},\Omega)} \sup_{0 \neq \bv \in \bH^1_0(\dist^{\alpha},\Omega)} \frac{ a(  \bv,  \bw ) }{ \| \nabla \bv\|_{\bL^2(\dist^{\alpha},\Omega)} \| \nabla \bw \|_{\bL^2(\dist^{-\alpha},\Omega)} } > 0.
\end{multline}

On the other hand, since we are in two dimensions and $\dist^{\alpha} \in A_2$, \cite[Theorem 1.3]{MR643158} shows that $\bH_0^1(\dist^{\alpha},\Omega) \hookrightarrow \bL^4(\dist^{\alpha},\Omega)$. Thus, if we denote by $C_{4 \to 2}$ the best embedding constant, we have that the convective term can be bound as follows:
\begin{equation}
  \begin{aligned}
  |c(\ue, \bw; \bv)| &= \left | \int_{\Omega} \ue \otimes \bw : \GRAD \bv \right | \leq \| \ue \|_{\bL^4(\dist^{\alpha},\Omega)}\| \bw \|_{\bL^4(\dist^{\alpha},\Omega)}  \| \nabla \bv \|_{\bL^2(\dist^{-\alpha},\Omega)} \\
  &\leq C_{4 \to 2}^2 \|\nabla \ue \|_{\bL^2(\dist^{\alpha},\Omega)}
  \| \nabla \bw \|_{\bL^2(\dist^{\alpha},\Omega)}  \| \nabla \bv \|_{\bL^2(\dist^{-\alpha},\Omega)}.
  \end{aligned}
\label{eq:bound_convective_term}
\end{equation}

\subsection{Weak formulation}
With definitions \eqref{eq:defofforma}--\eqref{eq:defofformc} at hand, we consider the following weak formulation for problem \eqref{eq:NSEStrong}: Find $(\ue,\pe) \in \calX$ such that
\begin{equation}
\label{eq:NSEVar}
a(\ue,\bv) + b_-(\bv,\pe) + c(\ue,\ue;\bv)
% \int_\Omega \left(  \GRAD  \ue : \GRAD \bv - \ue \otimes \ue : \GRAD \bv - \pe \DIV \bv\right)  
= \langle \bF \delta_z, \bv \rangle,
\qquad
b_+(\ue,q) = 0,
\end{equation}
for all $(\bv,q) \in \mathcal{Y}$. Here and in what follows, $\langle \cdot, \cdot \rangle$ denotes a duality pairing. The spaces used for such pairing shall be evident from the context.
We must immediately comment that, in order to guarantee that $\delta_{z} \in H_0^1(\dist^{-\alpha},\Omega)'$, and thus that $\langle \bF \delta_z, \bv \rangle$ is well--defined for $\bv \in \bH^1_0(\dist^{-\alpha},\Omega)$, the parameter $\alpha$ should be restricted to belong to the interval $(0,2)$; see \cite[Lemma 7.1.3]{KMR} and \cite[Remark 21.18]{MR2305115}.

\subsection{Existence and uniqueness for small data}
Let us define the mappings $\mathcal{S}: \calX \to \calY'$, $\mathcal{NL}: \calX \to \calY'$, and $\mathcal{F} \in \calY'$ by
\begin{align*}
  \langle \mathcal{S}(\bu,p), (\bv,q) \rangle & = a(\ue,\bv) + b_-(\bv,p) + b_+(\ue,q), 
\\
  \langle \mathcal{NL}(\bu,p), (\bv,q) \rangle & = 
  c(\ue,\ue;\bv),
\end{align*}
and $\langle \mathcal{F}, (\bv,q) \rangle = \langle \bF \delta_z, \bv \rangle$, respectively. With this notation \eqref{eq:NSEVar} can be equivalently written as the following operator equation in $\mathcal{Y}'$:
\[
  \mathcal{S}(\ue,p) + \mathcal{NL}(\ue,p) = \mathcal{F}.
\]

In what follows, by $\|\mathcal{S}^{-1} \|$ we shall denote the $\calY' \to \calX$ norm of $\mathcal{S}^{-1}$. We recall that $C_{4 \to 2}$ denotes the best constant in the embedding $\bH_0^1(\dist^{\alpha},\Omega) \hookrightarrow \bL^4(\dist^{\alpha},\Omega)$. Assume that the forcing term $ \bF \delta_z$ is sufficiently small so that
\begin{equation}
\label{eq:smallness}
C_{4\to2}^2 \| \mathcal{S}^{-1} \|^2  \| \bF \delta_z \|_{\bH^{1}_0(\dist^{-\alpha},\Omega)'} < \frac16.
\end{equation}
With this assumption at hand, we have existence and uniqueness for small data. 

\begin{proposition}[existence and uniqueness]
\label{cor:corContractionSmall}
Let $\Omega$ be Lipschitz. Assume that the forcing term $ \bF \delta_z$ is sufficiently small so that \eqref{eq:smallness} holds. If $\alpha \in (0,2)$, then there is a unique solution of \eqref{eq:NSEVar}. Moreover, this solution satisfies the estimates
\begin{equation}
  \| \GRAD \ue \|_{\bL^2(\dist^{\alpha},\Omega)} \leq \frac32 \| \mathcal{S}^{-1}  \| \ \|  \bF \delta_z \|_{\bH^{1}_0(\dist^{-\alpha},\Omega)'}
\label{eq:estimate_NSE}
\end{equation}
and
\[
  \| \pe \|_{L^2(\dist^\alpha,\Omega)} \lesssim \| \GRAD \ue \|_{\bL^2(\dist^{\alpha},\Omega)} + \| \GRAD \ue \|_{\bL^2(\dist^{\alpha},\Omega)}^2 + \|  \bF \delta_z \|_{\bH^{1}_0(\dist^{-\alpha},\Omega)'}.
\]
where the hidden constant is independent of $\ue$, $\pe$, and $\bF \delta_z$.
\end{proposition}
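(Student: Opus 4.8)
The plan is to write \eqref{eq:NSEVar} in the operator form $\mathcal{S}(\ue,\pe) + \mathcal{NL}(\ue,\pe) = \mathcal{F}$ in $\mathcal{Y}'$, recast it as a fixed point equation, and apply the Banach fixed point theorem. The first step is to note that $\mathcal{S}:\mathcal{X}\to\mathcal{Y}'$ is an isomorphism: the inf--sup condition \eqref{eq:infsup} for $a$, together with the weighted inf--sup conditions for $b_\pm$ from \cite{OS:17infsup}, place us within the generalized Babu{\v s}ka--Brezzi framework, so that $\mathcal{S}^{-1}$ exists and $\Lambda := \|\mathcal{S}^{-1}\| < \infty$. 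Since $\alpha\in(0,2)$ guarantees $\mathcal{F}\in\mathcal{Y}'$ and \eqref{eq:bound_convective_term} guarantees $\mathcal{NL}(\bv,q)\in\mathcal{Y}'$ for every $(\bv,q)\in\mathcal{X}$ --- note that $\mathcal{NL}$ depends only on the velocity component --- the map $\mathcal{T}:\mathcal{X}\to\mathcal{X}$, $\mathcal{T}(\bv,q) := \mathcal{S}^{-1}(\mathcal{F}-\mathcal{NL}(\bv,q))$, is well defined, and solving \eqref{eq:NSEVar} is equivalent to finding a fixed point of $\mathcal{T}$.

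The second step records the two quantitative properties of $\mathcal{NL}$. With $C := C_{4\to2}^2$, the bound \eqref{eq:bound_convective_term} gives directly $\|\mathcal{NL}(\bw,q)\|_{\mathcal{Y}'}\le C\,\|\GRAD\bw\|_{\bL^2(\dist^\alpha,\Omega)}^2$; and, splitting $c(\bw_1,\bw_1;\bv)-c(\bw_2,\bw_2;\bv)=c(\bw_1,\bw_1-\bw_2;\bv)+c(\bw_1-\bw_2,\bw_2;\bv)$ and invoking \eqref{eq:bound_convective_term} on each term, it yields the local Lipschitz estimate $\|\mathcal{NL}(\bw_1,q_1)-\mathcal{NL}(\bw_2,q_2)\|_{\mathcal{Y}'}\le C\bigl(\|\GRAD\bw_1\|_{\bL^2(\dist^\alpha,\Omega)}+\|\GRAD\bw_2\|_{\bL^2(\dist^\alpha,\Omega)}\bigr)\|\GRAD(\bw_1-\bw_2)\|_{\bL^2(\dist^\alpha,\Omega)}$.

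The third step is the fixed point argument. Put $\Phi := \|\bF\delta_z\|_{\bH^1_0(\dist^{-\alpha},\Omega)'}$ (so $\|\mathcal{F}\|_{\mathcal{Y}'}=\Phi$), let $\rho := \tfrac32\Lambda\Phi$, and consider the closed --- hence complete for the norm of $\mathcal{X}$ --- set $\mathbb{B} := \{(\bv,q)\in\mathcal{X}:\|\GRAD\bv\|_{\bL^2(\dist^\alpha,\Omega)}\le\rho\}$. If $(\bv,q)\in\mathbb{B}$, the boundedness estimate and $\|\mathcal{S}^{-1}\|=\Lambda$ give $\|\mathcal{T}(\bv,q)\|_{\mathcal{X}}\le\Lambda(\Phi+C\rho^2)=\Lambda\Phi\bigl(1+\tfrac94 C\Lambda^2\Phi\bigr)$, which is $\le\rho$ precisely when $C\Lambda^2\Phi\le\tfrac29$ --- implied by \eqref{eq:smallness} --- and this in particular forces the velocity component of $\mathcal{T}(\bv,q)$ into the $\rho$--ball, so $\mathcal{T}(\mathbb{B})\subset\mathbb{B}$. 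For $(\bv_1,q_1),(\bv_2,q_2)\in\mathbb{B}$, since $\mathcal{T}(\bv_1,q_1)-\mathcal{T}(\bv_2,q_2)=\mathcal{S}^{-1}(\mathcal{NL}(\bv_2,q_2)-\mathcal{NL}(\bv_1,q_1))$, the Lipschitz estimate yields $\|\mathcal{T}(\bv_1,q_1)-\mathcal{T}(\bv_2,q_2)\|_{\mathcal{X}}\le 2\Lambda C\rho\,\|\GRAD(\bv_1-\bv_2)\|_{\bL^2(\dist^\alpha,\Omega)}\le 3C\Lambda^2\Phi\,\|(\bv_1,q_1)-(\bv_2,q_2)\|_{\mathcal{X}}$, and $3C\Lambda^2\Phi<\tfrac12$ by \eqref{eq:smallness}. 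Thus $\mathcal{T}$ is a contraction on $\mathbb{B}$, the Banach fixed point theorem produces a unique $(\ue,\pe)\in\mathbb{B}$ solving \eqref{eq:NSEVar}, and running the last computation with two arbitrary solutions --- each of which, by the boundedness estimate, satisfies $\|\GRAD\ue\|_{\bL^2(\dist^\alpha,\Omega)}\le\Lambda(\Phi+C\|\GRAD\ue\|_{\bL^2(\dist^\alpha,\Omega)}^2)$ so that under \eqref{eq:smallness} the relevant root of this quadratic inequality places it in $\mathbb{B}$ --- upgrades this to uniqueness in $\mathcal{X}$.

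Finally, the a priori estimates. The bound \eqref{eq:estimate_NSE} is immediate from $(\ue,\pe)\in\mathbb{B}$. For the pressure I would return to the momentum identity $b_-(\bv,\pe)=\langle\bF\delta_z,\bv\rangle-a(\ue,\bv)-c(\ue,\ue;\bv)$, now with $\ue$ known, and apply the inf--sup condition for $b_-$ (equivalently, apply $\mathcal{S}^{-1}$ to the right--hand side $\mathcal{F}-\mathcal{NL}(\ue)$), using Cauchy--Schwarz with the pairing of the weights $\dist^{\pm\alpha}$ for $a$ and \eqref{eq:bound_convective_term} for $c$; this gives $\|\pe\|_{L^2(\dist^\alpha,\Omega)}\lesssim\Phi+\|\GRAD\ue\|_{\bL^2(\dist^\alpha,\Omega)}+\|\GRAD\ue\|_{\bL^2(\dist^\alpha,\Omega)}^2$, as claimed. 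The genuinely substantive ingredients are imported: that $\mathcal{S}$ is an isomorphism between the \emph{asymmetric} weighted pair $\mathcal{X}$ and $\mathcal{Y}'$ with $\|\mathcal{S}^{-1}\|<\infty$ --- which is where the class $A_2(\Omega)$ and the analyses of \cite{OS:17infsup,NSE:2020} enter --- and that the convective form is tamed through the weighted embedding $\bH_0^1(\dist^\alpha,\Omega)\hookrightarrow\bL^4(\dist^\alpha,\Omega)$, which is exactly why $\alpha$ must lie in $(0,2)$ so that simultaneously $\delta_z\in\bH_0^1(\dist^{-\alpha},\Omega)'$. Granting these, the only point demanding care is the arithmetic of the constants, so that the single threshold in \eqref{eq:smallness} delivers at once the self--mapping property, a contraction factor below $\tfrac12$, and the sharp constant $\tfrac32$ in \eqref{eq:estimate_NSE}.
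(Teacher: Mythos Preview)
Your proposal is correct and follows the same approach as the paper: the paper simply cites \cite[Corollary~1]{NSE:2020} for existence, uniqueness, and the velocity bound \eqref{eq:estimate_NSE}, while you unpack precisely the Banach fixed-point argument that underlies that citation, and your derivation of the pressure estimate via the weighted inf--sup condition for $b_-$ and the momentum equation coincides with the paper's own argument. The arithmetic with the constants --- $C\Lambda^2\Phi<\tfrac16$ implying both $C\Lambda^2\Phi\le\tfrac29$ for self-mapping and $3C\Lambda^2\Phi<\tfrac12$ for contraction --- is handled correctly.
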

\begin{proof}
Existence, uniqueness, and the velocity estimate are the content of \cite[Corollary 1]{NSE:2020}.

To show the estimate on the pressure, we invoke the weighted inf--sup condition \cite[Theorem 3.1]{MR2731700}, \cite[Theorem 1]{MR2548872}, \cite[Lemma 15]{DuranOtarolaAJS}
\[
  \| p \|_{L^2(\dist^\alpha,\Omega)} \lesssim \sup_{\boldsymbol0 \neq \bv \in \bH^1_0(\dist^{-\alpha},\Omega)} \frac{ b_-(\bv,p) }{\| \GRAD \bv \|_{\bL^2(\dist^{-\alpha},\Omega)}} \quad \forall p \in L^2(\dist^\alpha,\Omega)/\R.
\]
The hidden constant depends only on $\Omega$ and $[\dist^\alpha]_{A_2}$. Using this estimate for $\pe$, the first equation in \eqref{eq:NSEVar}, and the estimate on the convective term of \eqref{eq:bound_convective_term}
yields the desired pressure estimate.
\end{proof}

\section{Discretization}
\label{sec:inf_sup_stable_a_posteriori}

We now propose a finite element scheme to approximate the solution to \eqref{eq:NSEVar}. To accomplish this task, we first introduce some terminology and a few basic ingredients.

\subsection{Triangulation}
\label{sec:fem}
We denote by $\T = \{T\}$ a conforming partition of $\bar\Omega$ into closed simplices $T$ with size $h_T = \diam(T)$ and define $h_{\T} = \max_{T \in \T} h_T$.  We denote by $\Tr$ the collection of conforming and shape regular meshes that are refinements of an initial mesh $\T_0$ \cite{CiarletBook,Guermond-Ern}.

We denote by $\Sides$ the set of internal one dimensional interelement boundaries $S$ of $\T$. For $S \in \Sides$, we indicate by $h_S$ the diameter of $S$. If $T\in\T$, we define $\Sides_{T}$ as the subset of $\Sides$ that contains the sides of $T$. For $S \in \Sides$, we set $\Ne_S = \{ T^+, T^-\}$, where $T^+, T^- \in \T$ are such that $S=T^+ \cap T^-$.
% , in other words, $\Ne_S$ denotes the subset of $\T$ that contains the two elements of $\T$ that have $S$ as a side. 
For $T \in \T$, we define the following \emph{stars} or \emph{patches} associated with the element $T$
\begin{equation}
\Ne_T := \left\{ T' \in \T : \Sides_T \cap \Sides_{T'} \neq \emptyset \right\},
\qquad
\mathcal{S} _T := \{ T' \in \T : T \cap T' \neq \emptyset \}.
\label{eq:NeTstar}
\end{equation}
  
\subsection{Finite element spaces}
\label{sec:fem_spaces}

Given a mesh $\T \in \Tr$, we denote by $\mathbf{V}(\T)$ and $\mathcal{P}(\T)$ the finite element spaces that approximate the velocity field and the pressure, respectively, constructed over $\T$. The following elections are popular.
\begin{enumerate}[(a)]
\item The \emph{mini} element. This pair is studied, for instance, in \cite{MR799997}, \cite[Section 4.2.4]{Guermond-Ern}, and it is defined by
\begin{align}
\label{eq:mini_V}
\mathbf{V}(\T) & = \left\{  \bv_{\T} \in \mathbf{C}(\bar \Omega):\ \forall T \in \T, \bv_{\T|T} \in [\mathbb{P}_1(T) \oplus \mathbb{B}(T)]^2 \right\} \cap \bH_0^1(\Omega),
\\
\label{eq:mini_P}
\mathcal{P}(\T) & = \left\{  q_{\T} \in  L^2(\Omega)/\R \cap C(\bar \Omega):\ \forall T \in \T, q_{\T|T} \in \mathbb{P}_1(T) \right\},
\end{align}
where $\mathbb{B}(T)$ denotes the space spanned by a local bubble function.

\item The Taylor--Hood pair. The lowest order Taylor--Hood element \cite{hood1974navier}, \cite{MR993474}, \cite[Section 4.2.5]{Guermond-Ern} is defined by
 \begin{align}
\label{eq:th_V}
\mathbf{V}(\T) & = \left\{  \bv_{\T} \in \mathbf{C}(\bar \Omega): \ \forall T \in \T, \bv_{\T|T} \in \mathbb{P}_2(T)^2 \right\} \cap \bH_0^1(\Omega),
\\
\label{eq:th_P}
\mathcal{P}(\T) & = \left\{ q_{\T} \in L^2(\Omega)/\R \cap C(\bar \Omega):\  \forall T \in \T, q_{\T|T} \in \mathbb{P}_1(T) \right\}.
\end{align}
\end{enumerate}

It is important to observe that, if $\omega \in A_2$, we have, for the elections given by \eqref{eq:mini_V}--\eqref{eq:th_P},
\[
\mathbf{V}(\T) \subset \mathbf{W}_0^{1,\infty}(\Omega) \subset \bH^1_0(\omega,\Omega), \qquad \mathcal{P}(\T) \subset L^{\infty}(\Omega)/\R \subset L^2(\omega,\Omega)/\R.
\] 
In addition, these spaces are compatible, in the sense that they satisfy weighted versions of the classical LBB condition \cite{Guermond-Ern,MR851383}. Namely, there exists a positive constant $\beta>0$, which is independent of $\T$ and for which we have, \cite[Theorems 6.2 and 6.4]{DuranOtarolaAJS}
\begin{equation}
\label{eq:infsup_discrete}
\beta \|q_{\T} \|_{L^2(\dist^{\pm\alpha},\Omega)} \leq\sup_{ \boldsymbol0 \neq \bv_{\T} \in \mathbf{V}(\T) } \frac{b_\mp(\bv_{\T},q_{\T}) }{ \| \nabla \bv_{\T} \|_{\bL^2(\dist^{\mp\alpha},\Omega)}} \quad \forall q_{\T} \in \mathcal{P}(\T).
\end{equation}

\subsection{Finite element approximation}
\label{sec:fem_approximation}
We now define a finite element approximation of problem \eqref{eq:NSEVar} as follows: Find $(\bu_{\T},\pe_{\T}) \in \mathbf{V}(\T) \times \mathcal{P}(\T)$ such that
\begin{equation}
\begin{aligned}
\label{eq:NSEh}
a(\bu_{\T},\bv_{\T}) + b_-(\bv_{\T}, \pe_{\T}) + c( \bu_{\T}, \bu_{\T};  \bv_{\T}) = \bF \cdot \bv_{\T}(z),
\quad
b_+(\bu_{\T},q_{\T}) = 0,
\end{aligned}
\end{equation}
for all $\bv_{\T} \in \mathbf{V}(\T)$ and $q_{\T} \in \mathcal{P}(\T)$.

Denote by $\mathcal{S}_{\T}$ the discrete version of $\mathcal{S}$. Since the pairs $(\mathbf{V}(\T),\mathcal{P}(\T))$ satisfy all the assumptions of \cite{DuranOtarolaAJS}, we have that the Stokes projection onto $(\mathbf{V}(\T),\mathcal{P}(\T))$ is stable in $\bH^1_0(\dist^{\alpha},\Omega) \times L^2(\dist^{\alpha},\Omega)/\R$. As a consequence, $\mathcal{S}_{\T}$ is a bounded linear operator whose inverse $\mathcal{S}_{\T}^{-1}$ is bounded uniformly over all $\T \in \Tr$; see \cite[Theorem 4.1]{DuranOtarolaAJS}. Assume that the forcing term $\bF \delta_z$ is sufficiently small so that \eqref{eq:smallness} holds with $\mathcal{S}$ replaced with $\mathcal{S}_{\T}$. Then there is a unique $(\bu_{\T},\pe_{\T}) \in \mathbf{V}(\T) \times \mathcal{P}(\T)$ that solves \eqref{eq:NSEh}. Moreover, we have 
%an estimate similar to \eqref{eq:estimate_NSE}.
\begin{equation}
  \| \GRAD \ue_{\T} \|_{\bL^2(\dist^{\alpha},\Omega)} \leq \frac32 \| \mathcal{S}_{\T}^{-1}  \| \ \|  \bF \delta_z \|_{\bH^{1}_0(\dist^{-\alpha},\Omega)'},
\label{eq:estimate_NSEh}
\end{equation}
with a pressure estimate similar to that of Proposition~\ref{cor:corContractionSmall}.

\subsection{A quasi--interpolation operator}
\label{subsec:quasi}
In order to derive reliability properties for the proposed a posteriori error estimator, it is useful to have at hand a suitable quasi--interpolation operator with optimal approximation properties \cite{Verfurth}. We consider the operator $\Pi_{\T}:\bL^1(\Omega) \rightarrow \mathbf{V}(\T)$ analyzed in \cite{NOS3}. The construction of $\Pi_{\T}$ is inspired in the ideas developed by Cl\'ement \cite{MR0400739}, Scott and Zhang \cite{MR1011446}, and Dur\'an and Lombardi \cite{MR2164092}: it is built on local averages over stars and thus well--defined for functions in $\bL^1(\Omega)$; it also exhibits optimal approximation properties. In what follows, we shall make use of the following estimates of the local interpolation error \cite{MR3892359,NOS3}. To present them, we first define, for $T \in \T$, 
\begin{equation}
\label{eq:DT}
D_T: = \max_{x \in T} |x-z|.
\end{equation}

\begin{proposition}[stability and interpolation estimates]
Let $\alpha \in (-2,2)$, and $T \in \T$. Then, for every $\bv \in \bH^1(\dist^{\pm\alpha},\mathcal{S}_T)$, we have the local stability bound
\begin{equation}
\label{eq:local_stability}
 \| \GRAD \Pi_{\T} \bv\|_{\bL^2(\dist^{\pm\alpha},T)} \lesssim \| \GRAD \bv \|_{\bL^2(\dist^{\pm\alpha},\mathcal{S}_T)}
\end{equation}
and the interpolation error estimate
\begin{equation}
\label{eq:interpolation_estimate_1}
\|  \bv - \Pi_{\T} \bv \|_{\bL^2(\dist^{\pm\alpha},T)} \lesssim h_T \| \GRAD \bv \|_{\bL^2(\dist^{\pm\alpha},\mathcal{S}_T)}.
\end{equation}
In addition, if $\alpha \in (0,2)$, then we have that
\begin{equation}
\label{eq:interpolation_estimate_2}
 \|  \bv - \Pi_{\T} \bv \|_{\bL^2(T)} \lesssim h_T D_T^{\frac{\alpha}{2}} \| \GRAD \bv \|_{\bL^2(\dist^{-\alpha},\mathcal{S}_T)},
\end{equation}
The hidden constants, in the previous inequalities, are independent of $\bv$, the cell $T$, and the mesh $\T$.
\end{proposition}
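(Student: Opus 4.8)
The plan is to obtain the first two bounds directly from the analysis of $\Pi_{\T}$ carried out in \cite{NOS3}, and to deduce the third one from the second by a pointwise comparison of the weight $\dist^{-\alpha}$ with a constant on the element $T$. I recall that $\Pi_{\T}$ is built on local $L^2$--type averages over the stars $\mathcal{S}_T$, and that its stability and approximation theory in \cite{NOS3} (see also \cite{MR3892359}) is developed for an arbitrary Muckenhoupt weight $\omega \in A_2$, with hidden constants depending only on the shape regularity of $\T$ and on $[\omega]_{A_2}$. Since $\alpha \in (-2,2)$ entails $-\alpha \in (-2,2)$ as well, property \eqref{distance_A2} guarantees $\dist^{\alpha}, \dist^{-\alpha} \in A_2$, with characteristics $[\dist^{\pm\alpha}]_{A_2}$ that do not depend on $\T$. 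Invoking the relevant results of \cite{NOS3} with $\omega = \dist^{\pm\alpha}$ then yields the local stability estimate \eqref{eq:local_stability} and the weighted interpolation error estimate \eqref{eq:interpolation_estimate_1}, with constants uniform over $\Tr$.

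To prove \eqref{eq:interpolation_estimate_2} I use that, since $\alpha \in (0,2)$, the function $t \mapsto t^{-\alpha}$ is decreasing on $(0,\infty)$; hence, by the definition \eqref{eq:DT} of $D_T$, we have the pointwise bound $\dist^{-\alpha}(x) = |x-z|^{-\alpha} \geq D_T^{-\alpha}$ for a.e.\ $x \in T$, that is, $1 \leq D_T^{\alpha}\, \dist^{-\alpha}(x)$ on $T$. Therefore
\[
\| \bv - \Pi_{\T} \bv \|_{\bL^2(T)}^2 \leq D_T^{\alpha} \int_T \dist^{-\alpha} |\bv - \Pi_{\T} \bv|^2 = D_T^{\alpha} \, \| \bv - \Pi_{\T} \bv \|_{\bL^2(\dist^{-\alpha},T)}^2 ,
\]
and applying \eqref{eq:interpolation_estimate_1} with the weight $\dist^{-\alpha}$ gives \eqref{eq:interpolation_estimate_2}.

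This argument is largely bookkeeping; the only delicate point is to make sure that the constants inherited from \cite{NOS3,MR3892359} depend solely on the shape regularity of $\T$ and on $[\dist^{\pm\alpha}]_{A_2}$ --- so that they are uniform over the family $\Tr$ --- and not on the particular cell $T$, its distance to the point $z$, or the mesh size. This requires appealing to the precise statements of those references rather than to generic, unweighted interpolation theory.
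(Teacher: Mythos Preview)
Your argument is correct and matches the paper's treatment: the paper does not prove this proposition but simply attributes \eqref{eq:local_stability}--\eqref{eq:interpolation_estimate_2} to \cite{MR3892359,NOS3}, exactly as you do for the first two estimates. Your explicit derivation of \eqref{eq:interpolation_estimate_2} from \eqref{eq:interpolation_estimate_1} via the pointwise bound $1 \leq D_T^{\alpha}\,\dist^{-\alpha}$ on $T$ is elementary and sound, and is in fact how the estimate is obtained in \cite{MR3892359}.
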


\begin{proposition}[trace interpolation estimate]
Let $\alpha \in (0,2)$, $T \in \T$, $S \subset \Sides_T$, and $\bv \in \bH^1(\dist^{-\alpha},\mathcal{S}_T)$. Then we have the following interpolation error estimate for the trace
\begin{equation}
\label{eq:interpolation_estimate_trace}
 \|  \bv - \Pi_{\T} \bv \|_{\bL^2(S)} \lesssim h_T^{\frac{1}{2}} D_T^{\frac{\alpha}{2}} \| \GRAD \bv \|_{\bL^2(\dist^{-\alpha},\mathcal{S}_T)},
\end{equation}
where the hidden constant is independent of $\bv$, $T$, and the mesh $\T$.
\end{proposition}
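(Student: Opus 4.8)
The plan is to prove the trace interpolation estimate \eqref{eq:interpolation_estimate_trace} by combining a scaled trace inequality on the reference element with the two interpolation estimates \eqref{eq:interpolation_estimate_1} and \eqref{eq:interpolation_estimate_2} already established. First I would recall the standard (unweighted) multiplicative trace inequality on a simplex $T$: for $w \in H^1(T)$,
\[
  \| w \|_{L^2(S)}^2 \lesssim h_T^{-1} \| w \|_{L^2(T)}^2 + \| w \|_{L^2(T)} \| \GRAD w \|_{L^2(T)},
\]
with a constant depending only on shape regularity (this follows by mapping to the reference element, applying the trace theorem there, and scaling; see e.g.\ \cite{Verfurth,MR3892359}). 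Applying this with $w = \bv - \Pi_{\T}\bv$ componentwise on the patch $\mathcal{S}_T$ reduces the problem to controlling $\| \bv - \Pi_{\T}\bv \|_{\bL^2(T)}$ and $\| \GRAD(\bv - \Pi_{\T}\bv) \|_{\bL^2(T)}$.

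The second step is to insert the available bounds. For the $L^2$ term I would use \eqref{eq:interpolation_estimate_2}, which gives $\| \bv - \Pi_{\T}\bv \|_{\bL^2(T)} \lesssim h_T D_T^{\alpha/2} \| \GRAD \bv \|_{\bL^2(\dist^{-\alpha},\mathcal{S}_T)}$; this term contributes $h_T^{-1} (h_T D_T^{\alpha/2})^2 \| \GRAD \bv\|^2 = h_T D_T^{\alpha} \| \GRAD \bv\|^2_{\bL^2(\dist^{-\alpha},\mathcal{S}_T)}$, which is exactly the square of the claimed bound. For the cross term $\| \bv - \Pi_{\T}\bv \|_{\bL^2(T)} \| \GRAD(\bv - \Pi_{\T}\bv) \|_{\bL^2(T)}$, I would bound the first factor again by \eqref{eq:interpolation_estimate_2}. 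For the gradient factor I need an \emph{unweighted} $L^2$ bound on $\GRAD(\bv - \Pi_{\T}\bv)$; one writes $\| \GRAD(\bv - \Pi_{\T}\bv)\|_{\bL^2(T)} \le \| \GRAD \bv \|_{\bL^2(T)} + \| \GRAD \Pi_{\T}\bv\|_{\bL^2(T)}$, converts from unweighted to $\dist^{-\alpha}$-weighted norms over $T$ at the cost of a factor $D_T^{\alpha/2}$ (since on $T$ one has $|x-z|^{-\alpha} \gtrsim D_T^{-\alpha}$ for $\alpha>0$, hence $\|\cdot\|_{\bL^2(T)} \le D_T^{\alpha/2}\|\cdot\|_{\bL^2(\dist^{-\alpha},T)}$), and then applies the local stability bound \eqref{eq:local_stability} with the $-\alpha$ sign. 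This yields $\| \GRAD(\bv - \Pi_{\T}\bv)\|_{\bL^2(T)} \lesssim D_T^{\alpha/2} \| \GRAD \bv\|_{\bL^2(\dist^{-\alpha},\mathcal{S}_T)}$, so the cross term contributes $h_T D_T^{\alpha/2} \cdot D_T^{\alpha/2} \| \GRAD \bv\|^2 = h_T D_T^{\alpha}\|\GRAD\bv\|^2_{\bL^2(\dist^{-\alpha},\mathcal{S}_T)}$, again the square of the claim. Taking square roots and summing the two contributions finishes the proof.

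The step I expect to be the main (minor) obstacle is the careful bookkeeping of the passage between unweighted and weighted norms on $T$, and making sure it is legitimate: this requires $\alpha \in (0,2)$ so that $\dist^{-\alpha}$ is bounded below by $D_T^{-\alpha}$ on $T$ (this is where the hypothesis $\alpha>0$ is genuinely used, paralleling its role in \eqref{eq:interpolation_estimate_2}), together with checking that $z$ is allowed to lie in $T$ or on its boundary without spoiling the estimate — it does not, because $D_T = \max_{x\in T}|x-z|$ is used precisely to absorb the potential degeneracy of the weight near $z$. One must also be slightly careful that the constant in the scaled trace inequality depends only on the shape regularity of $\Tr$, not on $h_T$; this is standard but should be stated. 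Everything else is a routine combination of already-proven propositions, and the hidden constant inherits independence of $\bv$, $T$, and $\T$ from \eqref{eq:local_stability} and \eqref{eq:interpolation_estimate_2}.
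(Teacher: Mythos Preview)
The paper does not actually supply a proof of this proposition; it is stated as a fact about the quasi--interpolation operator $\Pi_{\T}$, implicitly drawing on the references \cite{MR3892359,NOS3} that accompany the preceding proposition. So there is no argument in the paper to compare against.

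Your argument is correct and is precisely the standard route one would take: the scaled multiplicative trace inequality reduces the trace norm of $\bv-\Pi_{\T}\bv$ to its unweighted $L^2$ and $H^1$ seminorms on $T$, the $L^2$ piece is handled directly by \eqref{eq:interpolation_estimate_2}, and the gradient piece is converted from unweighted to $\dist^{-\alpha}$--weighted at the cost of $D_T^{\alpha/2}$ (using $|x-z|\le D_T$ on $T$ and $\alpha>0$) and then controlled via \eqref{eq:local_stability}. The bookkeeping you flag is exactly the point: the inequality $\|f\|_{\bL^2(T)}\le D_T^{\alpha/2}\|f\|_{\bL^2(\dist^{-\alpha},T)}$ also guarantees $\bH^1(\dist^{-\alpha},T)\hookrightarrow \bH^1(T)$, so the unweighted trace inequality is legitimately applicable to $\bv-\Pi_{\T}\bv$. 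Nothing is missing.
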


\section{A posteriori error estimates}
\label{sec:a_posteriori}

In this section, we analyze a posterior error estimates for the finite element approximation \eqref{eq:NSEh} of problem \eqref{eq:NSEVar}. To begin with such an analysis, we define the velocity and pressure errors $(\be_{\ue},e_\pe)$ by
\begin{equation}
\label{eq:errors}
\boldsymbol{e}_{\ue}:=\ue-\ue_{\T} \in \bH^1_0(\dist^\alpha,\Omega),
\quad
e_{\pe}:= \pe- \pe_{\T} \in L^2(\dist^\alpha,\Omega)/\R,
\end{equation} 
respectively.

\subsection{Ritz projection}
\label{subsec:Ritz}
In order to perform a reliability analysis for the proposed a posteriori error estimator, inspired by the developments of \cite{MR1445736}, we introduce a Ritz projection $(\boldsymbol{\Phi},\psi)$ of the residuals. The pair $(\boldsymbol{\Phi},\psi)$ is defined as the solution to the following problem: Find $(\boldsymbol{\Phi},\psi) \in \mathcal{X}$ such that
\begin{equation}
  \begin{aligned}
 a(\boldsymbol{\Phi}, \bv) &=  a(\boldsymbol{e}_{\ue}, \bv) + b_-(\bv,e_{\pe}) + c(\ue, \boldsymbol{e}_{\ue}; \bv) + c(\boldsymbol{e}_{\ue},\ue_{\T}; \bv),
 \\
 (\psi,q)_{L^2(\Omega)} &= b_+(\boldsymbol{e}_{\ue},q),
 \end{aligned}
 \label{eq:Ritz}
\end{equation}
for all $(\bv,q) \in \mathcal{Y}$.

The following results yields the well--posedness of problem \eqref{eq:Ritz}.

\begin{theorem}[Ritz projection]
Problem \eqref{eq:Ritz} has a unique solution in $\mathcal{X}$. In addition, this solution satisfies the estimate
\begin{multline}
\| \nabla \boldsymbol{\Phi} \|_{\bL^2(\dist^{\alpha},\Omega)} + \| \psi \|_{L^2(\dist^{\alpha},\Omega)} \lesssim 
\| \nabla \boldsymbol{e}_{\ue} \|_{\bL^2(\dist^{\alpha},\Omega)} + \| e_{\pe} \|_{L^2(\dist^{\alpha},\Omega)} 
\\
+\| \nabla \boldsymbol{e}_{\ue} \|_{\bL^2(\dist^{\alpha},\Omega)}\left(  \| \nabla \ue \|_{\bL^2(\dist^{\alpha},\Omega)}+ \| \nabla \ue_{\T} \|_{\bL^2(\dist^{\alpha},\Omega)} \right),
 \label{eq:Ritz_Stability}
\end{multline}
where the hidden constant is independent of $(\boldsymbol{\Phi},\psi)$, $(\ue,\pe)$, and $(\ue_{\T},\pe_{\T})$.
\label{thm:Ritz_Projection}
\end{theorem}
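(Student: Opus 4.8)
The plan is to establish well-posedness of \eqref{eq:Ritz} by recognizing it as a (weighted) Stokes-type problem with a right-hand side that is a bounded linear functional on $\mathcal{Y}$, and then to invoke the inf--sup theory already available to us. First I would observe that the second equation in \eqref{eq:Ritz} together with the first constitutes a saddle-point system: the bilinear form on the left is exactly the pair $(a, b_\pm)$ governing the Stokes operator $\mathcal{S}$, whose inverse $\mathcal{S}^{-1}:\mathcal{Y}'\to\mathcal{X}$ is bounded by \eqref{eq:infsup} together with the weighted inf--sup condition for $b_-$ used in the proof of Proposition~\ref{cor:corContractionSmall}. Hence it suffices to show that the right-hand side of \eqref{eq:Ritz}, viewed as a functional $G\in\mathcal{Y}'$ acting on $(\bv,q)$ via
\[
\langle G,(\bv,q)\rangle := a(\boldsymbol{e}_{\ue},\bv)+b_-(\bv,e_{\pe})+c(\ue,\boldsymbol{e}_{\ue};\bv)+c(\boldsymbol{e}_{\ue},\ue_{\T};\bv)+b_+(\boldsymbol{e}_{\ue},q),
\]
is bounded, with norm controlled by the right-hand side of \eqref{eq:Ritz_Stability}. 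Then $(\boldsymbol{\Phi},\psi)=\mathcal{S}^{-1}G$ is the unique solution and the stability estimate follows from $\|(\boldsymbol{\Phi},\psi)\|_{\mathcal{X}}\le\|\mathcal{S}^{-1}\|\,\|G\|_{\mathcal{Y}'}$.

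Next I would bound $\|G\|_{\mathcal{Y}'}$ term by term. The linear terms are immediate from Cauchy--Schwarz in the appropriate weighted pairing: $|a(\boldsymbol{e}_{\ue},\bv)|\le\|\nabla\boldsymbol{e}_{\ue}\|_{\bL^2(\dist^{\alpha},\Omega)}\|\nabla\bv\|_{\bL^2(\dist^{-\alpha},\Omega)}$, and similarly $|b_-(\bv,e_{\pe})|\lesssim\|e_{\pe}\|_{L^2(\dist^{\alpha},\Omega)}\|\nabla\bv\|_{\bL^2(\dist^{-\alpha},\Omega)}$ and $|b_+(\boldsymbol{e}_{\ue},q)|\lesssim\|\nabla\boldsymbol{e}_{\ue}\|_{\bL^2(\dist^{\alpha},\Omega)}\|q\|_{L^2(\dist^{-\alpha},\Omega)}$; each of these uses only that the product of the dual weights $\dist^{\alpha}$ and $\dist^{-\alpha}$ is $1$, so no weight appears in the cross estimate. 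The two convective terms require the embedding $\bH_0^1(\dist^{\pm\alpha},\Omega)\hookrightarrow\bL^4(\dist^{\pm\alpha},\Omega)$ exactly as in \eqref{eq:bound_convective_term}: I would write, using H\"older with exponents $4,4,2$ and the weight splitting $\dist^{\alpha}\cdot\dist^{\alpha}\cdot\dist^{-\alpha}=\dist^{\alpha}$,
\[
|c(\ue,\boldsymbol{e}_{\ue};\bv)|\le C_{4\to2}^2\,\|\nabla\ue\|_{\bL^2(\dist^{\alpha},\Omega)}\|\nabla\boldsymbol{e}_{\ue}\|_{\bL^2(\dist^{\alpha},\Omega)}\|\nabla\bv\|_{\bL^2(\dist^{-\alpha},\Omega)},
\]
and analogously $|c(\boldsymbol{e}_{\ue},\ue_{\T};\bv)|\le C_{4\to2}^2\|\nabla\boldsymbol{e}_{\ue}\|_{\bL^2(\dist^{\alpha},\Omega)}\|\nabla\ue_{\T}\|_{\bL^2(\dist^{\alpha},\Omega)}\|\nabla\bv\|_{\bL^2(\dist^{-\alpha},\Omega)}$. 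Collecting these estimates and dividing by $\|(\bv,q)\|_{\mathcal{Y}}$ yields precisely $\|G\|_{\mathcal{Y}'}\lesssim\|\nabla\boldsymbol{e}_{\ue}\|_{\bL^2(\dist^{\alpha},\Omega)}+\|e_{\pe}\|_{L^2(\dist^{\alpha},\Omega)}+\|\nabla\boldsymbol{e}_{\ue}\|_{\bL^2(\dist^{\alpha},\Omega)}(\|\nabla\ue\|_{\bL^2(\dist^{\alpha},\Omega)}+\|\nabla\ue_{\T}\|_{\bL^2(\dist^{\alpha},\Omega)})$, which is the asserted bound.

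The one point deserving care — and the main (modest) obstacle — is that \eqref{eq:Ritz} is not literally $\mathcal{S}(\boldsymbol{\Phi},\psi)=G$ but has the mass-matrix term $(\psi,q)_{L^2(\Omega)}$ in the second equation rather than $b_+(\boldsymbol{\Phi},q)$; more precisely the system is: find $(\boldsymbol{\Phi},\psi)\in\mathcal{X}$ with $a(\boldsymbol{\Phi},\bv)=\langle G,(\bv,0)\rangle$ for all $\bv$ and $(\psi,q)_{L^2(\Omega)}=\langle G,(0,q)\rangle$ for all $q$. In fact this \emph{decouples}: the first equation together with the inf--sup condition \eqref{eq:infsup} for $a$ determines $\boldsymbol{\Phi}$ uniquely in the divergence-free subspace and, via the inf--sup condition for $b_-$, would determine an associated pressure — but here there is no pressure in the first equation, so $\boldsymbol{\Phi}$ is simply the Riesz-type representative under the nondegenerate pairing $a$ on $\bH_0^1(\dist^{\alpha})\times\bH_0^1(\dist^{-\alpha})$, giving $\|\nabla\boldsymbol{\Phi}\|_{\bL^2(\dist^{\alpha},\Omega)}\lesssim\|G(\cdot,0)\|$. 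Then $\psi$ is recovered from the second equation by the weighted inf--sup/Riesz argument on $L^2(\dist^{\alpha},\Omega)/\R$ paired with $L^2(\dist^{-\alpha},\Omega)/\R$ (the pairing $(\cdot,\cdot)_{L^2(\Omega)}$ is nondegenerate between these two weighted quotient spaces precisely because $\dist^{\alpha}\cdot\dist^{-\alpha}=1$), yielding $\|\psi\|_{L^2(\dist^{\alpha},\Omega)}\lesssim\|G(0,\cdot)\|$. Combining the two gives uniqueness, existence, and \eqref{eq:Ritz_Stability}. I would present this decoupling explicitly so that the reader sees why no smallness assumption is needed here: the nonlinearity has been frozen into the data $G$, and the operator being inverted is the linear Stokes-type operator, which is unconditionally invertible on these weighted spaces.
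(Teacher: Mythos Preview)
Your proposal is correct and, once you reach the third paragraph, coincides with the paper's argument: the system \eqref{eq:Ritz} decouples, $\boldsymbol{\Phi}$ is obtained from the inf--sup condition \eqref{eq:infsup} for $a$ alone (this is precisely the result from \cite{OS:17infsup} that the paper invokes), and $\psi$ is obtained from the nondegenerate $L^2$ pairing between $L^2(\dist^{\alpha},\Omega)/\R$ and $L^2(\dist^{-\alpha},\Omega)/\R$; the term-by-term bounds on the right-hand side via Cauchy--Schwarz and \eqref{eq:bound_convective_term} are identical to the paper's. The first two paragraphs, which attempt to cast \eqref{eq:Ritz} as $\mathcal{S}(\boldsymbol{\Phi},\psi)=G$, are a detour you yourself abandon---you could streamline the write-up by starting directly from the decoupling.
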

\begin{proof}
Define
\[
 \mathfrak{G}: \bH_0^1(\dist^{-\alpha},\Omega) \rightarrow \mathbb{R}, 
 \quad
 \bv \mapsto a(\boldsymbol{e}_{\ue}, \bv) + b_-(\bv,e_{\pe}) + c(\ue, \boldsymbol{e}_{\ue}; \bv) + c(\boldsymbol{e}_{\ue},\ue_{\T}; \bv).
\]
Notice that $\mathfrak{G}$ is linear. To prove that $\mathfrak{G} \in \bH_0^1(\dist^{-\alpha},\Omega)'$, we observe that 
\begin{multline*}
| \mathfrak{G}(\bv) | \leq ( \| \nabla \boldsymbol{e}_{\ue} \|_{\bL^2(\dist^{\alpha},\Omega)} + \| e_{\pe} \|_{L^2(\dist^{\alpha},\Omega)} + \| \ue \|_{\bL^4(\dist^{\alpha},\Omega)} \| \boldsymbol{e}_{\ue} \|_{\bL^4(\dist^{\alpha},\Omega)}
\\
+
\| \boldsymbol{e}_{\ue} \|_{\bL^4(\dist^{\alpha},\Omega)} \| \ue_{\T} \|_{\bL^4(\dist^{\alpha},\Omega)})\| \nabla \bv \|_{\bL^2(\dist^{-\alpha},\Omega)}.
\end{multline*}
This, combined with the Sobolev embedding $\bH_0^1(\dist^{\alpha},\Omega)\hookrightarrow \bL^4(\dist^{\alpha},\Omega) $ 
allows us to conclude.

Since $\dist^{\alpha} \in A_2(\Omega)$ and $\mathfrak{G} \in \bH_0^1(\dist^{-\alpha},\Omega)'$, we can thus invoke the results of \cite{OS:17infsup} to conclude the existence and uniqueness of $\boldsymbol{\Phi} \in \bH_0^1(\dist^{\alpha},\Omega)$ together with the bound
\begin{multline}
\| \nabla \boldsymbol{\Phi} \|_{\bL^2(\dist^{\alpha},\Omega)} \lesssim   \| \nabla \boldsymbol{e}_{\ue} \|_{\bL^2(\dist^{\alpha},\Omega)} + \| e_{\pe} \|_{L^2(\dist^{\alpha},\Omega)} 
\\
+\| \nabla \boldsymbol{e}_{\ue} \|_{\bL^2(\dist^{\alpha},\Omega)}(  \| \nabla \ue \|_{\bL^2(\dist^{\alpha},\Omega)}+ \| \nabla \ue_{\T} \|_{\bL^2(\dist^{\alpha},\Omega)}).
\end{multline}

On the other hand, since $\boldsymbol{e}_{\ue} \in \bH_0^1(\dist^{\alpha},\Omega)$, $b_+(\boldsymbol{e}_{\ue}, \cdot )$ defines a linear and bounded functional in $L^2(\dist^{-\alpha},\Omega)/\R$. This immediately yields the existence and uniqueness of $\psi \in L^2(\dist^{\alpha},\Omega)/\R$ together with the estimate
\[
 \| \psi \|_{L^2(\dist^{\alpha},\Omega)} \leq \| \DIV \boldsymbol{e}_{\ue} \|_{\bL^2(\dist^{\alpha},\Omega)}.
\]

A collection of the derived estimates yields \eqref{eq:Ritz_Stability}. This concludes the proof.
\end{proof}

\subsection{An upper bound for the error}
\label{subsec:upper_bound}
With the results of Theorem \ref{thm:Ritz_Projection} at hand, we observe that the pair $(\boldsymbol{e}_{\ue}, e_{\pe})$ can be seen as the solution to the following Stokes problem: Find $(\boldsymbol{e}_{\ue},e_{\pe})\in \mathcal{X}$ such that
\begin{equation}
  a(\boldsymbol{e}_{\ue}, \bv) + b_-(\bv,e_{\pe}) = \mathfrak{F}(\bv),
\quad
b_+(\boldsymbol{e}_{\ue},q) = (\psi,q)_{L^2(\Omega)}
 \label{eq:eu_ep_solves}
\end{equation}
for all $(\bv,q) \in \mathcal{Y}$, where
\[
 \mathfrak{F}: \bH_0^1(\dist^{-\alpha},\Omega) \rightarrow \mathbb{R}, 
 \quad
 \bv \mapsto a(\boldsymbol{\Phi}, \bv) - c(\ue, \boldsymbol{e}_{\ue}; \bv) - c(\boldsymbol{e}_{\ue},\ue_{\T}; \bv).
\]
It is clear that $\mathfrak{F}$ is linear in $\bH_0^1(\dist^{-\alpha},\Omega)$. In fact, $\mathfrak{F} \in \bH_0^1(\dist^{-\alpha},\Omega)'$, since
\begin{equation}
  \begin{aligned}
 \|\mathfrak{F}\|_{\bH_0^1(\dist^{-\alpha},\Omega)'} &\leq \| \nabla \boldsymbol{\Phi} \|_{\bL^2(\dist^{\alpha},\Omega)}
 \\
 &+ C_{4\to2}^2 \| \nabla \boldsymbol{e}_{\ue} \|_{\bL^2(\dist^{\alpha},\Omega)} \left( \| \nabla \ue \|_{\bL^2(\dist^{\alpha},\Omega)} + \| \nabla \ue_{\T} \|_{\bL^2(\dist^{\alpha},\Omega)} \right).
  \end{aligned}
\label{eq:estimate_for_F}
\end{equation}

With the aid of this identification, we now prove that the energy norm of the error can be bounded in terms of the the energy norm of the Ritz projection, which in turn will allow us to provide computable upper bounds for the error. To do so, we must assume that the forcing term $\bF \delta_z$ is sufficiently small so that
\begin{equation}
1 - \| \mathcal{S}^{-1} \| C_{4\to2}^2 \left[  \| \nabla \ue \|_{\bL^2(\dist^{\alpha},\Omega)} + \| \nabla \ue_{\T} \|_{\bL^2(\dist^{\alpha},\Omega)} \right]  \geq \lambda > 0.
\label{eq:smallness_assumption}
\end{equation}

\begin{corollary}[upper bound for the error]
Assume that the forcing term $ \bF \delta_z$ is sufficiently small so that \eqref{eq:smallness_assumption} holds. We then have that
\begin{equation}
\| \nabla \boldsymbol{e}_{\ue} \|_{\bL^2(\dist^{\alpha},\Omega)} + \| e_{\pe} \|_{L^2(\dist^{\alpha},\Omega)} 
\lesssim 
\| \nabla \boldsymbol{\Phi} \|_{\bL^2(\dist^{\alpha},\Omega)} +  \| \psi \|_{L^2(\dist^{\alpha},\Omega)} ,
\label{eq:upper_bound_for_error}
\end{equation}
where the hidden constant is independent of $(\ue,\pe)$, $(\ue_{\T},\pe_{\T})$, and $(\boldsymbol{\Phi},\psi)$.
\label{thm:upper_bound_for_error}
\end{corollary}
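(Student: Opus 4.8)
The plan is to exploit the Stokes problem \eqref{eq:eu_ep_solves} that $(\boldsymbol{e}_{\ue},e_{\pe})$ solves, together with the stability of $\mathcal{S}^{-1}$. First I would observe that, by definition of $\mathcal{S}$, the system \eqref{eq:eu_ep_solves} is equivalent to the operator equation
\[
\mathcal{S}(\boldsymbol{e}_{\ue}, e_{\pe}) = (\mathfrak{F}, \psi) \in \mathcal{Y}',
\]
where the right-hand side is understood via $\langle (\mathfrak{F},\psi),(\bv,q)\rangle = \mathfrak{F}(\bv) + (\psi,q)_{L^2(\Omega)}$. Applying $\mathcal{S}^{-1}$ and taking norms gives
\[
\| \nabla \boldsymbol{e}_{\ue} \|_{\bL^2(\dist^{\alpha},\Omega)} + \| e_{\pe} \|_{L^2(\dist^{\alpha},\Omega)}
\lesssim \| \mathcal{S}^{-1}\| \left( \|\mathfrak{F}\|_{\bH_0^1(\dist^{-\alpha},\Omega)'} + \| \psi \|_{L^2(\dist^{\alpha},\Omega)} \right),
\]
since the norm on $\calX$ is equivalent to the sum of the two weighted norms by the weighted Poincar\'e inequality.

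Next I would insert the bound \eqref{eq:estimate_for_F} on $\|\mathfrak{F}\|_{\bH_0^1(\dist^{-\alpha},\Omega)'}$. This produces, on the right-hand side, the term
\[
\| \mathcal{S}^{-1}\| C_{4\to2}^2 \| \nabla \boldsymbol{e}_{\ue} \|_{\bL^2(\dist^{\alpha},\Omega)} \left( \| \nabla \ue \|_{\bL^2(\dist^{\alpha},\Omega)} + \| \nabla \ue_{\T} \|_{\bL^2(\dist^{\alpha},\Omega)} \right),
\]
which is exactly the quantity controlled by the smallness hypothesis \eqref{eq:smallness_assumption}. The key step is therefore the absorption argument: moving this term to the left-hand side and using \eqref{eq:smallness_assumption} to conclude that the coefficient of $\| \nabla \boldsymbol{e}_{\ue} \|_{\bL^2(\dist^{\alpha},\Omega)}$ is bounded below by $\lambda > 0$. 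Dividing through by $\lambda$ leaves precisely
\[
\| \nabla \boldsymbol{e}_{\ue} \|_{\bL^2(\dist^{\alpha},\Omega)} + \| e_{\pe} \|_{L^2(\dist^{\alpha},\Omega)}
\lesssim \| \nabla \boldsymbol{\Phi} \|_{\bL^2(\dist^{\alpha},\Omega)} + \| \psi \|_{L^2(\dist^{\alpha},\Omega)},
\]
with hidden constant depending on $\|\mathcal{S}^{-1}\|$ and $\lambda$ but not on the solutions.

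I do not expect any serious obstacle here; the proof is essentially a Lax--Milgram-type perturbation/absorption argument made possible by the smallness assumption. The one point requiring a little care is the bookkeeping of which constants the hidden $\lesssim$ absorbs: the bound on $e_{\pe}$ comes for free from the $\mathcal{S}^{-1}$ stability (which encodes the weighted inf--sup condition for the pressure), so it is the velocity estimate alone that drives the absorption, and one must be careful to apply \eqref{eq:smallness_assumption} only after the $e_{\pe}$ contribution has already been bounded by the same right-hand side. A secondary subtlety is ensuring that $(\mathfrak{F},\psi)$ genuinely defines an element of $\calY'$ so that $\mathcal{S}^{-1}$ may be applied — but this is immediate from $\mathfrak{F} \in \bH_0^1(\dist^{-\alpha},\Omega)'$ (already shown in \eqref{eq:estimate_for_F}) and $\psi \in L^2(\dist^{\alpha},\Omega)/\R$ (already shown in Theorem~\ref{thm:Ritz_Projection}).
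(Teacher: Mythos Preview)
Your proposal is correct and follows essentially the same route as the paper: the paper invokes the weighted Stokes well-posedness result (equivalently, the boundedness of $\mathcal{S}^{-1}$) on the system \eqref{eq:eu_ep_solves}, inserts the bound \eqref{eq:estimate_for_F} on $\|\mathfrak{F}\|$, and then absorbs the convective term via \eqref{eq:smallness_assumption}. Your remark about bookkeeping is slightly overcautious---both the velocity and pressure estimates emerge simultaneously from $\mathcal{S}^{-1}$, so no separate treatment of $e_{\pe}$ is needed before absorption---but this does not affect correctness.
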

\begin{proof}
Since $\dist^{\alpha} \in A_2(\Omega)$ and $\mathfrak{F} \in \bH_0^1(\dist^{-\alpha},\Omega)'$, we can apply \cite[Theorem 17]{OS:17infsup} to conclude that
\begin{multline*}
 \| \nabla \boldsymbol{e}_{\ue} \|_{\bL^2(\dist^{\alpha},\Omega)} + \| e_{\pe} \|_{L^2(\dist^{\alpha},\Omega)} \leq \| \mathcal{S}^{-1} \| \left( 
 \| \nabla \boldsymbol{\Phi} \|_{\bL^2(\dist^{\alpha},\Omega)} +  \| \psi \|_{L^2(\dist^{\alpha},\Omega)} \right.
 \\
\left. + C_{4\to2}^2 \| \nabla \boldsymbol{e}_{\ue} \|_{\bL^2(\dist^{\alpha},\Omega)} \left[ \| \nabla \ue \|_{\bL^2(\dist^{\alpha},\Omega)} + \| \nabla \ue_{\T} \|_{\bL^2(\dist^{\alpha},\Omega)} \right] \right),
\end{multline*}
where we have also used estimate \eqref{eq:estimate_for_F}. The smallness assumption \eqref{eq:smallness_assumption} allows us to absorb the last term in this estimate on the left hand side and obtain \eqref{eq:upper_bound_for_error}. This concludes the proof.
\end{proof}

\subsection{A residual--type error estimator}
\label{subsec:residual--type}
In this section,
we propose an a posteriori error estimator for the finite element approximation \eqref{eq:NSEh} of problem \eqref{eq:NSEVar}. 

Define, for $\alpha \in (0,2)$ and $T \in \T$, the \emph{element indicator}
\begin{multline}
\E_{\alpha}^2(\bu_{\T},\pe_{\T};T):= h_T^2D_T^{\alpha}   \|  \Delta \bu_{\T} -  (\ue_{\T} \cdot \nabla) \ue_{\T} - \DIV \ue_{\T} \ue_{\T} - \nabla \pe_{\T} \|_{\bL^2(T)}^2 
 \\
 +  \|  \DIV \bu_{\T} \|_{L^2(\dist^\alpha, T)}^2 + h_T D_T^{\alpha} \| \llbracket (\GRAD \bu_\T -\pe_{\T} \mathbf{I}) \cdot \boldsymbol{\nu}\rrbracket \|_{\bL^2(\partial T \setminus \partial \Omega)}^{2} + h_{T}^{\alpha} | \bF |^2 \#(\{z\} \cap T),
\label{eq:local_indicator}
\end{multline}
where $(\bu_{\T},\pe_{\T})$ denotes the solution to the discrete problem \eqref{eq:NSEh}, $\mathbf{I} \in \R^{d \times d}$ denotes the identity matrix, and, for a set $E$, by  $\#(E)$ we mean its cardinality. Thus $\#(\{z\} \cap T)$ equals one if $z \in T$ and zero otherwise. Here we must recall that we consider our elements $T$ to be closed sets. For a discrete tensor valued function $\bW_{\T}$, we denote by $\llbracket \bW_{\T} \cdot \boldsymbol{\nu}\rrbracket$ the jump or interelement residual, which is defined, on the internal side $S \in \Sides$ shared by the distinct elements $T^+$, $T^{-} \in \mathcal{N}_S$, by
\begin{equation}
\label{eq:jump}
 \llbracket \bW_{\T} \cdot \boldsymbol{\nu} \rrbracket =  \bW_{\T}|_{T^+}\cdot \boldsymbol{\nu}^+ +  \bW_{\T}|_{T^-} \cdot \boldsymbol{\nu}^-.
\end{equation}
Here $\boldsymbol{\nu}^+, \boldsymbol{\nu}^-$ are unit normals on $S$ pointing towards $T^+$, $T^{-}$, respectively. The \emph{error estimator} is thus defined as
\begin{equation}
\E_{\alpha}(\bu_{\T},\pe_{\T};\T): = \left( \sum_{T \in \T} \E^2_{\alpha}(\bu_{\T},\pe_{\T};T) \right)^{\frac{1}{2}}.
\label{eq:global_estimator}
\end{equation}

\subsection{Reliability}

We present the following global reliability result.

\begin{theorem}[global reliability]
Let $(\ue,\pe) \in \mathcal{X}$ be the solution to \eqref{eq:NSEVar} and the pair $(\ue_{\T},\pe_{\T}) \in (\mathbf{V}(\T), \mathcal{P}(\T))$ be its finite element approximation defined as the solution to \eqref{eq:NSEh}. Assume that the forcing term $\bF \delta_z$ is sufficiently small so that \eqref{eq:smallness_assumption} holds. If $\alpha \in (0,2)$, then
\begin{equation}
\| \nabla \boldsymbol{e}_{\ue} \|_{\bL^2(\dist^{\alpha},\Omega)} + \| e_{\pe} \|_{L^2(\dist^{\alpha},\Omega)} 
\lesssim 
\E_{\alpha}(\bu_{\T},\pe_{\T};\T),
\label{eq:reliability}
\end{equation}
where the hidden constant is independent of the continuous and discrete solutions, the size of the elements in the mesh $\T$, and $\#\T$.

\end{theorem}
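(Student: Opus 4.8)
The plan is to combine Corollary~\ref{thm:upper_bound_for_error} with a residual-based estimate of the Ritz projection $(\boldsymbol{\Phi},\psi)$. By Corollary~\ref{thm:upper_bound_for_error}, under the smallness assumption \eqref{eq:smallness_assumption}, it suffices to show
\[
\| \nabla \boldsymbol{\Phi} \|_{\bL^2(\dist^{\alpha},\Omega)} + \| \psi \|_{L^2(\dist^{\alpha},\Omega)} \lesssim \E_{\alpha}(\bu_{\T},\pe_{\T};\T).
\]
First I would bound $\psi$: since $(\psi,q)_{L^2(\Omega)} = b_+(\boldsymbol{e}_{\ue},q) = -\int_\Omega q \DIV \ue_{\T}$ (using $\DIV \ue = 0$), testing with $q = \psi \dist^{\alpha}$ — modulo the mean-value subtraction — and using $\dist^\alpha \in A_2(\Omega)$ gives $\| \psi \|_{L^2(\dist^{\alpha},\Omega)} \lesssim \| \DIV \ue_{\T} \|_{L^2(\dist^\alpha,\Omega)}$, which is exactly the second term summed in \eqref{eq:local_indicator}.

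Next I would bound $\| \nabla \boldsymbol{\Phi} \|_{\bL^2(\dist^{\alpha},\Omega)}$. Using the inf--sup condition \eqref{eq:infsup} for $a$ on the weighted spaces, there is $\bv \in \bH^1_0(\dist^{-\alpha},\Omega)$ with $\| \nabla \bv\|_{\bL^2(\dist^{-\alpha},\Omega)} = 1$ realizing (up to a constant) $\| \nabla \boldsymbol{\Phi} \|_{\bL^2(\dist^{\alpha},\Omega)} \lesssim a(\boldsymbol{\Phi}, \bv)$. By the definition \eqref{eq:Ritz} of the Ritz projection,
\[
a(\boldsymbol{\Phi}, \bv) = a(\boldsymbol{e}_{\ue}, \bv) + b_-(\bv,e_{\pe}) + c(\ue, \boldsymbol{e}_{\ue}; \bv) + c(\boldsymbol{e}_{\ue},\ue_{\T}; \bv).
\]
The key manipulation is to rewrite the right-hand side as a residual: adding and subtracting $\ue_{\T}$ and using that $c(\ue,\boldsymbol{e}_\ue;\bv) + c(\boldsymbol{e}_\ue,\ue_{\T};\bv) = c(\ue,\ue;\bv) - c(\ue_{\T},\ue_{\T};\bv)$ (the standard algebraic identity for the bilinearization of the convective term), and then using the continuous equation \eqref{eq:NSEVar} to replace $a(\ue,\bv) + b_-(\bv,\pe) + c(\ue,\ue;\bv)$ by $\langle \bF\delta_z,\bv\rangle = \bF\cdot\bv(z)$, we obtain
\[
a(\boldsymbol{\Phi}, \bv) = \bF\cdot\bv(z) - a(\ue_{\T},\bv) - b_-(\bv,\pe_{\T}) - c(\ue_{\T},\ue_{\T};\bv).
\]
Now Galerkin orthogonality enters: subtracting $\bv_{\T} = \Pi_{\T}\bv$ from $\bv$ is legitimate only on the portion of the residual that the discrete equation \eqref{eq:NSEh} annihilates; because the right-hand sides of \eqref{eq:NSEVar} and \eqref{eq:NSEh} differ ($\bF\cdot\bv(z)$ versus $\bF\cdot\bv_{\T}(z)$), the Dirac term does \emph{not} fully cancel, and one is left with the extra contribution $\bF\cdot(\bv(z) - \bv_{\T}(z))$. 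I would then integrate by parts element by element, producing the interior residual $\Delta\ue_{\T} - (\ue_{\T}\cdot\nabla)\ue_{\T} - \DIV\ue_{\T}\,\ue_{\T} - \nabla\pe_{\T}$ on each $T$ (the $\DIV\ue_{\T}\,\ue_{\T}$ piece arising from the non-divergence-free discrete velocity when integrating $c$ by parts), the jump term $\llbracket(\nabla\ue_{\T} - \pe_{\T}\mathbf{I})\cdot\boldsymbol{\nu}\rrbracket$ on interior sides, and the nodal term $\bF\cdot(\bv - \bv_{\T})(z)$.

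The estimation of each piece then uses the interpolation estimates of the two propositions in section~\ref{subsec:quasi}: the unweighted estimate \eqref{eq:interpolation_estimate_2} handles the interior residual (pairing $h_T D_T^{\alpha/2}$ against an $\bL^2(T)$ residual and $\| \nabla \bv\|_{\bL^2(\dist^{-\alpha},\mathcal{S}_T)}$), the trace estimate \eqref{eq:interpolation_estimate_trace} handles the jump term (giving the $h_T D_T^{\alpha}$ weight on $\partial T\setminus\partial\Omega$), and for the Dirac term I would use $|\bF\cdot(\bv - \bv_{\T})(z)| \le |\bF|\,\|\bv - \bv_{\T}\|_{\bL^\infty(T_z)}$ on the element $T_z$ containing $z$, then invoke an inverse-type / $L^\infty$ bound on $\bv - \Pi_{\T}\bv$ in terms of $h_{T_z}^{\alpha/2}\|\nabla\bv\|_{\bL^2(\dist^{-\alpha},\mathcal{S}_{T_z})}$ to produce the term $h_T^{\alpha}|\bF|^2\#(\{z\}\cap T)$ — this is the step most particular to the singular-data setting and is where the restriction $\alpha\in(0,2)$ is used (so that $\dist^{-\alpha}$ still lies in $A_2$ and point evaluation is controlled). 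After a Cauchy--Schwarz over elements and using $\sum_T \|\nabla\bv\|^2_{\bL^2(\dist^{-\alpha},\mathcal{S}_T)} \lesssim \|\nabla\bv\|^2_{\bL^2(\dist^{-\alpha},\Omega)} = 1$ by finite overlap of the patches $\mathcal{S}_T$, collecting all contributions yields $\| \nabla \boldsymbol{\Phi} \|_{\bL^2(\dist^{\alpha},\Omega)} \lesssim \E_{\alpha}(\bu_{\T},\pe_{\T};\T)$; together with the bound on $\psi$ and Corollary~\ref{thm:upper_bound_for_error} this gives \eqref{eq:reliability}. I expect the main obstacle to be the careful treatment of the nodal Dirac residual $\bF\cdot(\bv - \bv_{\T})(z)$: one must justify the pointwise evaluation of $\bv - \Pi_{\T}\bv$ and extract exactly the weight $h_T^{\alpha}$, which requires the weighted $L^\infty$-type interpolation bound rather than the standard $\bL^2$ estimates, and is the only place where the dimension being two and the precise range of $\alpha$ are essential.
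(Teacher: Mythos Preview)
Your overall architecture matches the paper exactly: reduce to bounding $\|\nabla\boldsymbol{\Phi}\|_{\bL^2(\dist^{\alpha},\Omega)}+\|\psi\|_{L^2(\dist^{\alpha},\Omega)}$ via Corollary~\ref{thm:upper_bound_for_error}; control $\psi$ by testing with $q=\dist^{\alpha}\psi$ (plus a constant to fix the mean); express $a(\boldsymbol{\Phi},\bv)$ as the residual $\langle\bF\delta_z,\bv\rangle-a(\ue_{\T},\bv)-b_-(\bv,\pe_{\T})-c(\ue_{\T},\ue_{\T};\bv)$, subtract the discrete equation with $\bv_{\T}=\Pi_{\T}\bv$, integrate by parts elementwise, and estimate the interior and jump residuals with \eqref{eq:interpolation_estimate_2} and \eqref{eq:interpolation_estimate_trace}. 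All of this is precisely what the paper does.

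The only substantive discrepancy is your treatment of the Dirac term $\langle\bF\delta_z,\bv-\Pi_{\T}\bv\rangle$. You propose to bound it by $|\bF|\,\|\bv-\Pi_{\T}\bv\|_{\bL^\infty(T_z)}$ and then invoke an ``inverse-type / $L^\infty$ bound'' on $\bv-\Pi_{\T}\bv$. This is the weak link: $\bv-\Pi_{\T}\bv$ is \emph{not} a discrete function, so no inverse inequality applies, and a weighted $L^\infty$ interpolation estimate of the form $\|\bv-\Pi_{\T}\bv\|_{\bL^\infty(T)}\lesssim h_T^{\alpha/2}\|\nabla\bv\|_{\bL^2(\dist^{-\alpha},\mathcal{S}_T)}$ is not among the tools provided (Propositions in section~\ref{subsec:quasi}). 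It may be provable from a local weighted Morrey-type embedding, but you would have to establish it separately. The paper avoids this entirely: it uses a local \emph{dual-norm} bound on $\delta_z$ (citing \cite[Theorem~4.7]{AGM}) of the form
\[
|\langle\bF\delta_z,\bw\rangle|\lesssim |\bF|\Bigl(h_T^{\frac{\alpha}{2}-1}\|\bw\|_{\bL^2(\dist^{-\alpha},T)}+h_T^{\frac{\alpha}{2}}\|\nabla\bw\|_{\bL^2(\dist^{-\alpha},T)}\Bigr),
\]
with $\bw=\bv-\Pi_{\T}\bv$, and then applies the weighted $L^2$ estimates \eqref{eq:interpolation_estimate_1} and \eqref{eq:local_stability} already stated. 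This yields $|\bF|\,h_T^{\alpha/2}\|\nabla\bv\|_{\bL^2(\dist^{-\alpha},\mathcal{S}_T)}$ directly, without any pointwise or $L^\infty$ argument. Replacing your $L^\infty$ step with this dual-norm bound closes the gap and makes the proof complete.
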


\begin{proof}
We proceed in three steps.

\noindent \emph{Step 1.} Using the first equation of \eqref{eq:Ritz} and \eqref{eq:NSEVar} we obtain that
\begin{equation}
  a(\boldsymbol{\Phi}, \bv) = \langle \bF \delta_z, \bv \rangle - \sum_{T \in \T} \int_{T} \left( \nabla \ue_{\T}:\nabla \bv -  \ue_{\T} \otimes  \ue_{\T}: \nabla \bv - \pe_{\T} \DIV \bv \right) ,
  \label{eq:first_step}
\end{equation}
for every $\bv \in \bH_0^1(\dist^{-\alpha},\Omega)$. Integrating by parts we arrive at the identity
\begin{multline}
a(\boldsymbol{\Phi}, \bv) = \langle \bF \delta_z, \bv \rangle + \sum_{S \in \Sides} \int_S \llbracket (\nabla \bu_{\T}  - p_{\T} \mathbf{I}) \cdot \boldsymbol{\nu}\rrbracket \cdot \bv
\\
+ \sum_{T \in \T} \int_{T} \left(\Delta \ue_{\T} -  (\ue_{\T} \cdot \nabla) \ue_{\T} - \DIV \ue_{\T} \ue_{\T} - \nabla \pe_{\T} \right)
\cdot \bv.
\label{eq:indentity_Phi_residual_new}
\end{multline}

On the other hand, the first equation of problem \eqref{eq:NSEh} can be rewritten as
\[
\langle \bF \delta_z, \bv_{\T} \rangle - a(\ue_{\T},\bv_{\T}) - b_-(\bv_{\T}, \pe_{\T}) - c(\ue_{\T},\ue_{\T}; \bv_{\T}) = 0 \quad
\forall \bv_{\T} \in \mathbf{V}(\T).
\]
Set $\bv_{\T} = \Pi_{\T} \bv$ in the previous expression, apply, again, an integration by parts formula and invoke \eqref{eq:first_step} to arrive at the identity
\begin{multline}
a(\boldsymbol{\Phi}, \bv) = \langle \bF \delta_z, \bv-\Pi_{\T}\bv  \rangle + \sum_{S \in \Sides} \int_S \llbracket (\nabla \bu_{\T}  - \pe_{\T} \mathbf{I}) \cdot \boldsymbol{\nu}\rrbracket \cdot (\bv-\Pi_{\T}\bv)
\\
+ \sum_{T \in \T} \int_{T} \left(\Delta \ue_{\T} -  (\ue_{\T} \cdot \nabla) \ue_{\T} - \DIV \ue_{\T} \ue_{\T} - \nabla \pe_{\T} \right) \cdot
(\bv-\Pi_{\T}\bv) = \mathbf{I} + \mathbf{II} + \mathbf{III}.
\label{eq:indentity_Phi_residual}
\end{multline}
Notice that, to derive the previous expression, we have used that $\int_{S} \llbracket \ue_{\T} \otimes \ue_{\T} \cdot \boldsymbol{\nu} \rrbracket \cdot (\bv-\Pi_{\T}\bv) = 0$, which follows from the fact our finite element velocity space consists of continuous functions.

We now control each term separately. To control the term $\mathbf{I}$, we first invoke the local bound of \cite[Theorem 4.7]{AGM} for $\delta_z$ and then the interpolation error estimate \eqref{eq:interpolation_estimate_1} and the stability bound \eqref{eq:local_stability} to arrive at
\begin{equation}
\begin{aligned}
 \mathbf{I} & \lesssim |\bF| \left( h_T^{\frac{\alpha}{2} - 1} \| \bv-\Pi_{\T}\bv\|_{\bL^2(\dist^{-\alpha},T)} + h_T^{\frac{\alpha}{2}} \| \nabla(\bv-\Pi_{\T}\bv)\|_{\bL^2(\dist^{-\alpha},T)}\right)
 \\
 & \lesssim |\bF|  h_T^{\frac{\alpha}{2} } \| \nabla \bv\|_{\bL^2(\dist^{-\alpha},\mathcal{S}_T)}.
\end{aligned}
\end{equation}
The control of $\mathbf{II}$ follows from the trace interpolation error estimate \eqref{eq:interpolation_estimate_trace},
\begin{equation}
\begin{aligned}
\mathbf{II} & \lesssim \sum_{S \in \Sides}  \| \llbracket (\nabla \bu_{\T}  - \pe_{\T} \mathbf{I}) \cdot \boldsymbol{\nu}\rrbracket \|_{\bL^2(S)} \|\bv-\Pi_{\T}\bv\|_{\bL^2(S)} 
\\
& \lesssim  \sum_{S \in \Sides} h_T^{\frac{1}{2}} D_T^{\frac{\alpha}{2}}\| \llbracket (\nabla \bu_{\T} -\pe_{\T} \mathbf{I}) \cdot \boldsymbol{\nu} \rrbracket \|_{\bL^2(S)} \| \nabla \bv \|_{\bL^2(\dist^{-\alpha},\mathcal{S}_T)}.
\end{aligned}
\end{equation}
We finally bound $\mathbf{III}$ using the error estimate \eqref{eq:interpolation_estimate_2}:
\begin{equation}
 \label{eq:II}
 \mathbf{III} \lesssim \sum_{T \in \T} h_T D_T^{\frac{\alpha}{2}} \| \Delta \ue_{\T} -  (\ue_{\T} \cdot \nabla) \ue_{\T} - \DIV \ue_{\T} \ue_{\T} - \nabla \pe_{\T}  \|_{\bL^2(T)} \| \nabla \bv \|_{\bL^2(\dist^{-\alpha},\mathcal{S}_T)}.
\end{equation}

We now apply the inf--sup condition \eqref{eq:infsup}, the identity \eqref{eq:indentity_Phi_residual} and the estimates obtained for the terms $\mathbf{I}$, $ \mathbf{II}$, and $\mathbf{III}$ to arrive at
\begin{multline*}
 \| \nabla \boldsymbol{\Phi} \|_{\bL^2(\dist^{\alpha},\Omega)}^2 \lesssim  \left(\sup_{ \boldsymbol0 \neq \bv \in \bH^1_0(\dist^{-\alpha},\Omega)} \frac{ a ( \boldsymbol{\Phi},\bv ) }{ \| \bv \|_{\bH_0^1(\dist^{-\alpha},\Omega)} } \right)^2
 \\
 \lesssim 
\sum_{T \in \T} \left( h_T D_T^{\alpha} \| \llbracket (\nabla \bu_{\T}  - \pe_{\T} \mathbf{I}) \cdot \boldsymbol{\nu}\rrbracket \|_{\bL^2(\partial T \setminus \partial \Omega)}^{2}  \right.
 \\
 \left. + h_T^2D_T^{\alpha}   \|  \Delta \bu_{\T} -  (\ue_{\T} \cdot \nabla) \ue_{\T} - \DIV \ue_{\T} \ue_{\T} - \nabla \pe_{\T} \|_{\bL^2(T)}^2 
 + h_{T}^{\alpha} | \bF |^2 \#(\{z \}\cap T)  \right).
\end{multline*}
Notice that, to obtain the last estimate, we have also used the finite overlapping property of stars.

\noindent \emph{Step 2.} Notice that since $\psi \in L^2(\dist^{\alpha},\Omega)$, then $\tilde q = \dist^{\alpha} \psi \in  L^2(\dist^{-\alpha},\Omega)$. Define now $q = \tilde q + c$, where $c \in \R$ is chosen so that $q \in L^2(\dist^{-\alpha},\Omega)/\R$. This particular choice of test function for the second equation of \eqref{eq:Ritz} yields
\begin{equation}
\| \psi \|^2_{L^2(\dist^{\alpha},\Omega)} =  b_+(\boldsymbol{e}_{\ue}, \dist^{\alpha} \psi) = -b_+(\ue_{\T}, \dist^{\alpha} \psi) \leq \| \DIV \ue_{\T} \|_{L^2(\dist^{\alpha},\Omega)} \| \psi \|_{L^2(\dist^{\alpha},\Omega)}.
\end{equation}
Consequently, $\| \psi \|_{L^2(\dist^{\alpha},\Omega)} \leq \| \DIV \ue_{\T} \|_{L^2(\dist^{\alpha},\Omega)}$.

\noindent \emph{Step 3.} In light of the smallness assumption \eqref{eq:smallness_assumption}, we may apply the estimate \eqref{eq:upper_bound_for_error} of Corollary \ref{thm:upper_bound_for_error} to write
\[
\| \nabla \boldsymbol{e}_{\ue} \|_{\bL^2(\dist^{\alpha},\Omega)} + \| e_{\pe} \|_{L^2(\dist^{\alpha},\Omega)} 
\lesssim 
\left(\| \nabla \boldsymbol{\Phi} \|_{\bL^2(\dist^{\alpha},\Omega)} +  \| \psi \|_{L^2(\dist^{\alpha},\Omega)} \right).
\]
The desired estimate \eqref{eq:reliability} thus follows from the estimates derived in steps 1 and 2. This concludes the proof.
\end{proof}

\subsection{Local efficiency analysis}

To derive efficiency properties for the local indicator $\E_{\alpha}(\bu_{\T},p_{\T};T)$ we utilize standard residual estimation techniques but on the basis of suitable bubble functions, whose construction we owe to \cite[Section 5.2]{AGM}.

Given $T \in \T$, we introduce an element bubble function $\varphi_T$ that satisfies  
$0 \leq \varphi_T \leq 1$,
\begin{equation}
\label{eq:bubble_T}
\varphi_T(z) = 0, \qquad |T| \lesssim \int_T \varphi_T, \qquad \| \GRAD \varphi_T \|_{\bL^{\infty}(R_T)} \lesssim h_T^{-1},
\end{equation}
and there exists a simplex $T^{*} \subset T$ such that $R_{T}:= \supp(\varphi_T) \subset T^{*}$. Notice that, since $\varphi_T$ satisfies \eqref{eq:bubble_T}, we have that
\begin{equation}
\label{eq:aux_bubble_T}
 \| \theta \|_{L^2(R_T)} \lesssim \left\| \varphi_T^{\frac{1}{2}} \theta \right\|_{L^2(R_T)} \quad \forall \theta \in \mathbb{P}_{5}(R_T).
\end{equation}

Given $S \in \Sides$, we introduce an edge bubble function $\varphi_S$ that satisfies $0 \leq \varphi_S \leq 1$,
\begin{equation}
\label{eq:bubble_S}
\varphi_S(z) = 0, \qquad |S| \lesssim \int_S \varphi_S, \qquad \| \GRAD \varphi_S \|_{ \bL^{\infty}(R_{S} ) } \lesssim h_S^{-1},
\end{equation}
and $R_S:= \supp(\varphi_S)$ is such that, if $\mathcal{N}_{S} = \{ T, T' \}$, there are simplices $T_{*} \subset T$ and $T_{*}' \subset T'$ such that $R_S \subset T_{*} \cup T_{*}' \subset T \cup T'$.

\begin{proposition}[estimates for bubble functions]
Let $T \in \T$ and $\varphi_T$ be the bubble function that satisfies \eqref{eq:bubble_T}. If $\alpha \in (0,2)$, then
\begin{equation}
\label{eq:chi_bubble_T}
h_T \| \GRAD (\theta \varphi_T) \|_{\bL^2(\dist^{-\alpha},T)} \lesssim D_T^{-\frac{\alpha}{2}} \| \theta \|_{L^2(T)} \quad \forall \theta \in \mathbb{P}_{5}(T).
\end{equation}
Let $S \in \Sides$ and $\varphi_S$ be the bubble function that satisfies \eqref{eq:bubble_S}. If $\alpha \in (0,2)$, then
\begin{equation}
\label{eq:chi_bubble_S}
h_T^{\frac{1}{2}} \| \GRAD (\theta \varphi_S) \|_{\bL^2(\dist^{-\alpha},\mathcal{N}_S)} \lesssim D_T^{-\frac{\alpha}{2}} \| \theta \|_{L^2(S)} \quad \forall \theta \in \mathbb{P}_3(S),
\end{equation}
where $\theta$ is extended to the elements that comprise $\mathcal{N}_S$ as a constant along the direction normal to $S$.
\end{proposition}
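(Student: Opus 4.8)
The plan is to establish each of the two inequalities \eqref{eq:chi_bubble_T} and \eqref{eq:chi_bubble_S} by reducing the weighted norm on the left to an unweighted one via control of the weight $\dist^{-\alpha}$ on the supports of the bubble functions, and then invoking standard scaling (inverse) estimates on a finite element. For \eqref{eq:chi_bubble_T}, first I would expand $\GRAD(\theta\varphi_T) = \varphi_T \GRAD \theta + \theta \GRAD \varphi_T$ and estimate each summand. Since $\alpha \in (0,2)$, the weight $\dist^{-\alpha}(x) = |x-z|^{-\alpha}$ is singular only at $z$. The key geometric fact is that $\varphi_T(z)=0$ and, more precisely, since $z$ is a vertex of the subsimplex $T^\ast$ (this is how the bubble in \cite[Section 5.2]{AGM} is constructed so as to vanish at $z$), on $R_T = \supp(\varphi_T) \subset T^\ast$ one has $\varphi_T(x) \lesssim h_T^{-1}|x-z|$ and $|\GRAD\varphi_T(x)| \lesssim h_T^{-1}$, so that $\varphi_T^2 \dist^{-\alpha} \lesssim h_T^{-2}|x-z|^{2-\alpha}$ and $(\GRAD\varphi_T)^2\dist^{-\alpha}$ must be handled more carefully. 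The cleanest route is: $\dist^{-\alpha}(x) \leq D_T^{-\alpha}\, (|x-z|/D_T)^{-\alpha}$ is not bounded, so instead I would write $\varphi_T^2\dist^{-\alpha}$ and, using $\varphi_T \leq 1$ together with $\varphi_T \lesssim h_T^{-1}|x-z|$ raised to the power $\alpha$, namely $\varphi_T^\alpha \lesssim h_T^{-\alpha}|x-z|^\alpha$, deduce $\varphi_T^2 \dist^{-\alpha} = \varphi_T^{2-\alpha}\varphi_T^\alpha|x-z|^{-\alpha} \lesssim h_T^{-\alpha}$, and similarly treat the $\GRAD\varphi_T$ term via $|\GRAD(\theta\varphi_T)| = |\GRAD(\theta\varphi_T)|^{1-\alpha/?}\cdots$; more robustly, one shows $|\GRAD(\theta\varphi_T)|$ is itself bounded by $h_T^{-1}$ times a polynomial, and then $|\GRAD(\theta\varphi_T)|^2\dist^{-\alpha} \lesssim h_T^{-2-\alpha}|x-z|^{-\alpha}\cdot(\text{poly})$; integrating $|x-z|^{-\alpha}$ over $T$ against $|T| \sim h_T^2$ and using $D_T \gtrsim h_T$ when $z \in \bar T$ gives $\int_T |x-z|^{-\alpha} \lesssim h_T^{2-\alpha}$, hence the bound $h_T^2 \cdot h_T^{-2-\alpha}\cdot h_T^{2-\alpha}$ — this does not immediately produce $D_T^{-\alpha}\|\theta\|_{L^2(T)}^2$, so the correct argument must distinguish the two cases $z \in \bar T$ (where $D_T \sim h_T$) and $z \notin \bar T$ (where $\dist^{-\alpha}$ is bounded on $T$ by $\operatorname{dist}(z,T)^{-\alpha}$, comparable to $D_T^{-\alpha}$ up to shape regularity).

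Concretely, the argument splits into two regimes. If $z \notin \bar{T}$, then $|x-z| \ge \operatorname{dist}(z,T)$ for all $x\in T$, and by shape regularity $\operatorname{dist}(z,T) \sim D_T = \max_{x\in T}|x-z|$; hence $\dist^{-\alpha}(x) \lesssim D_T^{-\alpha}$ uniformly on $T$, so $h_T\|\GRAD(\theta\varphi_T)\|_{\bL^2(\dist^{-\alpha},T)} \lesssim D_T^{-\alpha/2} h_T \|\GRAD(\theta\varphi_T)\|_{\bL^2(T)} \lesssim D_T^{-\alpha/2}\|\theta\|_{L^2(T)}$ by the standard inverse inequality $h_T\|\GRAD(\theta\varphi_T)\|_{L^2(T)} \lesssim \|\theta\varphi_T\|_{L^2(T)} \lesssim \|\theta\|_{L^2(T)}$ valid for the finite-dimensional space $\mathbb{P}_5(T)\cdot\varphi_T$ on a reference element, scaled. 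If $z \in \bar T$, then $D_T \sim h_T$, so the claimed bound reads $h_T\|\GRAD(\theta\varphi_T)\|_{\bL^2(\dist^{-\alpha},T)} \lesssim h_T^{-\alpha/2}\|\theta\|_{L^2(T)}$. For this I would pass to the reference element $\hat T$ via the affine map $F_T$ with $|\det DF_T| \sim h_T^2$, $\|DF_T\|\sim h_T$, $\|DF_T^{-1}\|\sim h_T^{-1}$; under this pullback $\dist^{-\alpha}(x) = |F_T(\hat x) - z|^{-\alpha} \sim h_T^{-\alpha}|\hat x - \hat z|^{-\alpha}$ where $\hat z = F_T^{-1}(z) \in \hat T$, and the $\GRAD$ and volume factors contribute $h_T^{-2}\cdot h_T^2 = 1$, reducing everything to the scale-invariant reference estimate $\|\GRAD_{\hat x}(\hat\theta\hat\varphi_T)\|_{L^2(|\hat x - \hat z|^{-\alpha},\hat T)} \lesssim \|\hat\theta\|_{L^2(\hat T)}$, with constant independent of the position of $\hat z \in \hat T$. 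Since $\hat\varphi_T$ vanishes at $\hat z$ to first order and $|\hat x - \hat z|^{-\alpha} \in A_2$ on $\R^2$ uniformly over $\hat z$ in a compact set (using $\alpha < 2$, cf. \eqref{distance_A2}), the weighted $L^2$ norm of the polynomial $\GRAD(\hat\theta\hat\varphi_T)$, which has a zero at $\hat z$ coming from the $\hat\theta\GRAD\hat\varphi_T$... wait — $\GRAD\hat\varphi_T$ need not vanish at $\hat z$; but $\hat\varphi_T$ itself does, so write $\GRAD(\hat\theta\hat\varphi_T)=\hat\varphi_T\GRAD\hat\theta + \hat\theta\GRAD\hat\varphi_T$; the first term carries the zero of $\hat\varphi_T$ and is thus $O(|\hat x-\hat z|)$ near $\hat z$, killing the $|\hat x-\hat z|^{-\alpha}$ singularity since $-\alpha+2>0$; the second term does not, but $|\hat x-\hat z|^{-\alpha}$ is locally integrable on $\hat T \subset \R^2$ precisely because $\alpha<2$, and $\int_{\hat T}|\hat x - \hat z|^{-\alpha}d\hat x$ is bounded uniformly in $\hat z\in \hat T$, so $\|\hat\theta\GRAD\hat\varphi_T\|_{L^2(|\hat x-\hat z|^{-\alpha},\hat T)} \lesssim \|\hat\theta\|_{L^\infty(\hat T)}\lesssim \|\hat\theta\|_{L^2(\hat T)}$ by equivalence of norms on the finite-dimensional space $\mathbb{P}_5(\hat T)$.

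The edge estimate \eqref{eq:chi_bubble_S} follows the same template. Since $\varphi_S(z)=0$, $|\GRAD\varphi_S|\lesssim h_S^{-1}$, and $\supp\varphi_S \subset T_\ast\cup T_\ast' \subset \mathcal{N}_S$, one works element by element on $\mathcal{N}_S = \{T,T'\}$; again split according to whether $z$ lies in $\bar T\cup\bar{T'}$ or not. In the far regime $\dist^{-\alpha}\lesssim D_T^{-\alpha}$ on $\mathcal{N}_S$ and one uses the standard scaled trace-bubble inverse estimate $h_S^{1/2}\|\GRAD(\theta\varphi_S)\|_{\bL^2(\mathcal{N}_S)} \lesssim \|\theta\|_{L^2(S)}$ for $\theta \in \mathbb{P}_3(S)$ extended constantly along the normal, together with $h_S \sim h_T$ by shape regularity. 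In the near regime $D_T \sim h_T \sim h_S$ and one pulls back to the reference configuration as above; the only extra ingredient is the (classical) equivalence between the $L^2(S)$ norm of $\theta$ and the $L^2(T)$ norm of its constant normal extension, up to a factor $h_S^{1/2}$, which is already built into the statement. I expect the main obstacle to be making the reference-element estimate genuinely uniform with respect to the location of $\hat z$ in $\hat T$ (including $\hat z$ on the boundary of $\hat T$, which is the relevant case since $z$ is a mesh vertex) — this requires either a compactness argument on $\mathbb{P}_5(\hat T)\times \hat T$ or an explicit integration of $|\hat x-\hat z|^{-\alpha}$, and keeping track of the fact that $\hat\varphi_T$ (and $\hat\varphi_S$) vanish \emph{only} at $\hat z$ and not on a whole neighborhood means the $\hat\theta\GRAD\hat\varphi$ term must be absorbed purely by the integrability $\alpha<2$ rather than by cancellation. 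Everything else is routine scaling and the finite-overlap of patches; assembling the element-wise bounds gives the two displayed inequalities.
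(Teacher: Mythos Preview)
The paper does not prove this proposition; it simply cites \cite[Lemma 5.2]{AGM}. Your proposal therefore supplies what the paper omits, and your overall strategy---split into a ``far'' regime where the weight is essentially constant on $T$ and a ``near'' regime handled by scaling to the reference element and using the local integrability of $|\hat x-\hat z|^{-\alpha}$ in two dimensions---is exactly the approach taken in that reference.

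There is, however, one genuine error in your case split. You write that if $z\notin\bar T$ then ``by shape regularity $\operatorname{dist}(z,T)\sim D_T$''. This is false: if $z$ lies just outside $T$ at distance $\varepsilon\ll h_T$, then $\operatorname{dist}(z,T)=\varepsilon$ while $D_T\geq h_T/2$, so the two are not comparable and the bound $\dist^{-\alpha}\lesssim D_T^{-\alpha}$ on $T$ does not follow. The correct dichotomy is not $z\in\bar T$ versus $z\notin\bar T$, but rather $\operatorname{dist}(z,T)\geq h_T$ (far) versus $\operatorname{dist}(z,T)<h_T$ (near). In the far regime one does have $D_T\leq \operatorname{dist}(z,T)+h_T\leq 2\operatorname{dist}(z,T)$, hence $\operatorname{dist}(z,T)\sim D_T$ and your argument goes through. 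In the near regime one has $h_T/2\leq D_T\leq 2h_T$, so $D_T\sim h_T$, and your reference-element argument applies verbatim: after scaling, $\hat z$ lies in a bounded neighbourhood of $\hat T$, the integral $\int_{\hat T}|\hat x-\hat z|^{-\alpha}\,d\hat x$ is bounded uniformly in $\hat z$ because $\alpha<2$, and the scaled bubble satisfies $|\hat\varphi_T|\leq 1$, $|\hat\nabla\hat\varphi_T|\lesssim 1$ (this is exactly the content of \eqref{eq:bubble_T} after rescaling), which together with norm equivalence on $\mathbb{P}_5(\hat T)$ gives the reference estimate. The same correction applies to the edge bubble. With this fix your argument is complete and matches \cite{AGM}.
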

\begin{proof}
See \cite[Lemma 5.2]{AGM}.
\end{proof}

Having constructed these local bubble functions, the local efficiency can be shown following more or less standard arguments.

\begin{theorem}[local efficiency]\label{Th:efficiency}
Let  $(\bu,\pe) \in \mathcal{X}$ be the solution to problem \eqref{eq:NSEVar} and $(\bu_{\T},\pe_{\T}) \in \mathbf{V}(\T) \times \mathcal{P}(\T)$ its finite element approximation given as the solution to \eqref{eq:NSEh}. Assume that the forcing term $ \bF \delta_z$ is sufficiently small so that \eqref{eq:smallness_assumption} holds. If $\alpha \in (0,2)$, then
\begin{equation}
\label{eq:local_lower_bound}
\E^2_{\alpha}(\bu_{\T},\pe_{\T}; T) \lesssim \| \GRAD \be_{\bu} \|^2_{\bL^2(\dist^{\alpha},\mathcal{N}_{T})} + \|  e_{\pe} \|^2_{L^2(\dist^{\alpha},\mathcal{N}_{T})},
\end{equation}
where the hidden constant is independent of the continuous and discrete solutions, the size of the elements in the mesh $\T$, and $\#\T$.
 \end{theorem}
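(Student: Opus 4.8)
The plan is to establish the four constituent bounds of the local indicator $\E_\alpha^2(\bu_\T,\pe_\T;T)$ separately, each against the local energy error $\|\GRAD\be_\bu\|_{\bL^2(\dist^\alpha,\Ne_T)}+\|e_\pe\|_{L^2(\dist^\alpha,\Ne_T)}$, using the residual-and-bubble-function machinery. First I would record the elementwise residual equation: subtracting the discrete problem \eqref{eq:NSEh} from the continuous weak formulation \eqref{eq:NSEVar}, integrating by parts on each $T$, and using that $\bv$ vanishes at $z$ for the bubble-supported test functions (so the Dirac term drops out for those), one gets for all $\bv$ supported in an element or edge patch and vanishing at $z$
\[
a(\be_\bu,\bv)+b_-(\bv,e_\pe)+c(\ue,\be_\bu;\bv)+c(\be_\bu,\ue_\T;\bv)=\sum_{T}\int_T \bR_T\cdot\bv+\sum_{S}\int_S \bR_S\cdot\bv,
\]
where $\bR_T=\Delta\bu_\T-(\ue_\T\cdot\GRAD)\ue_\T-\DIV\ue_\T\,\ue_\T-\GRAD\pe_\T$ is the interior residual and $\bR_S=\llbracket(\GRAD\bu_\T-\pe_\T\mathbf I)\cdot\bnu\rrbracket$ the jump residual.

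\textbf{Interior residual term.} For the first term $h_T^2 D_T^\alpha\|\bR_T\|_{\bL^2(T)}^2$, since $\bR_T|_T$ is a polynomial (in $\mathbb P_5(T)$ for Taylor--Hood, say), I would test the residual equation with $\bv=\bR_T\varphi_T$, extended by zero outside $R_T\subset T$. Property \eqref{eq:aux_bubble_T} gives $\|\bR_T\|_{L^2(R_T)}^2\lesssim\int_T\varphi_T|\bR_T|^2=\int_T\bR_T\cdot\bv$, which equals the left-hand side of the residual equation (the jump terms vanish since $\bv=0$ on $\partial T$, and the Dirac term vanishes since $\varphi_T(z)=0$). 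I then bound that left-hand side using Cauchy--Schwarz in weighted spaces — converting $\bR_T\in\bL^2(T)$ estimates to weighted ones by inserting $\dist^{-\alpha}$ and invoking $D_T$ as in \eqref{eq:chi_bubble_T} — together with the bound \eqref{eq:chi_bubble_T} on $\|\GRAD(\bR_T\varphi_T)\|_{\bL^2(\dist^{-\alpha},T)}$ and the convective bound \eqref{eq:bound_convective_term}. Multiplying through and absorbing $h_T^{-1}D_T^{-\alpha/2}$ factors yields $h_TD_T^{\alpha/2}\|\bR_T\|_{\bL^2(T)}\lesssim\|\GRAD\be_\bu\|_{\bL^2(\dist^\alpha,T)}+\|e_\pe\|_{L^2(\dist^\alpha,T)}+(\text{convective terms})$; the smallness assumption \eqref{eq:smallness_assumption} handles the convective contributions through the a priori bounds \eqref{eq:estimate_NSE}, \eqref{eq:estimate_NSEh}.

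\textbf{Jump term and divergence/Dirac terms.} For the edge term $h_TD_T^\alpha\|\bR_S\|_{\bL^2(S)}^2$, I would test with $\bv=\bR_S\varphi_S$ (with $\bR_S$ extended constantly normal to $S$), use \eqref{eq:bubble_S} to get $\|\bR_S\|_{L^2(S)}^2\lesssim\int_S\bR_S\cdot\bv$, rewrite that integral via the residual equation — now picking up an interior contribution $\int_{\Ne_S}\bR_T\cdot\bv$ which is controlled by the already-established interior estimate on the two elements of $\Ne_S$ — and close with \eqref{eq:chi_bubble_S} and Cauchy--Schwarz, again converting to weighted norms through $D_T$. The divergence term $\|\DIV\bu_\T\|_{L^2(\dist^\alpha,T)}^2$ is immediate: $\DIV\bu_\T=\DIV\bu_\T-\DIV\bu=-\DIV\be_\bu$ since $\DIV\bu=0$, so it is bounded directly by $\|\GRAD\be_\bu\|_{\bL^2(\dist^\alpha,T)}^2$. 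Finally the Dirac term $h_T^\alpha|\bF|^2\#(\{z\}\cap T)$: when $z\in T$, I would choose a test function $\bv$ equal to $\bF$ (times a suitable bump) at $z$ and supported in $T$, use the residual identity including now the Dirac term $\langle\bF\delta_z,\bv\rangle=\bF\cdot\bv(z)\sim|\bF|^2$, move the interior and jump residuals to the other side using the bounds just proved, and estimate the weighted norm of $\bv$ by $h_T^{\alpha/2}$-type factors; this is essentially the local lower-bound argument for point sources in \cite{AGM}. Summing the four pieces over the patch $\Ne_T$ (finite overlap) gives \eqref{eq:local_lower_bound}.

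\textbf{Main obstacle.} The delicate point is the bookkeeping of the distance weight $\dist^\alpha$ versus the factor $D_T^\alpha$ throughout: the bubble-function estimates \eqref{eq:chi_bubble_T}–\eqref{eq:chi_bubble_S} are precisely the tool that converts unweighted $\bL^2$ norms of polynomial residuals into weighted ones at the cost of $D_T^{-\alpha/2}$, and one must check these powers of $h_T$ and $D_T$ balance so that the indicator's prefactors reappear exactly. The second subtlety, as usual for Navier--Stokes a posteriori analysis, is that the residual equation contains the convective terms $c(\ue,\be_\bu;\cdot)+c(\be_\bu,\ue_\T;\cdot)$, which when bounded produce factors $\|\GRAD\ue\|_{\bL^2(\dist^\alpha,\Omega)}+\|\GRAD\ue_\T\|_{\bL^2(\dist^\alpha,\Omega)}$ multiplying $\|\GRAD\be_\bu\|_{\bL^2(\dist^\alpha,\Ne_T)}$; these are acceptable (and need no absorption in an efficiency estimate, only reliability did) because they are already part of the right-hand side of \eqref{eq:local_lower_bound} up to the smallness-controlled constants.
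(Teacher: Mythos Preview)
Your proposal is correct and follows essentially the same route as the paper. The only cosmetic difference is that the paper packages the residual identity through the Ritz projection, writing $a(\boldsymbol{\Phi},\bv)=\langle\bF\delta_z,\bv\rangle+\text{(interior residuals)}+\text{(jumps)}$ and then, when bounding $a(\boldsymbol{\Phi},\boldsymbol{\Upsilon}_T)$, unpacking $\boldsymbol{\Phi}$ back into $a(\be_\bu,\cdot)+b_-(\cdot,e_\pe)+c(\ue,\be_\bu;\cdot)+c(\be_\bu,\ue_\T;\cdot)$ via a local version of \eqref{eq:Ritz_Stability}; you write that same left-hand side directly, which is equivalent by the very definition \eqref{eq:Ritz} of $\boldsymbol{\Phi}$. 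One small correction: for the Dirac term the bump $\eta$ must be taken with support in the patch $\mathcal{N}_T$ rather than in $T$ alone (so that it can be smooth with $\eta(z)=1$ regardless of where $z$ sits in $T$); this is consistent with your own remark that both interior and jump residuals from neighboring elements appear in that step, and it is exactly what the paper does.
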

\begin{proof}
We estimate each contribution in \eqref{eq:local_indicator} separately, so the proof has several steps.

\noindent \emph{Step 1}. For $T \in \T$ we bound the bulk term $h_T^2D_T^{\alpha}\|  \Delta \bu_{\T} -  (\ue_{\T} \cdot \nabla) \ue_{\T} - \DIV \ue_{\T} \ue_{\T} - \nabla p_{\T} \|_{\bL^2(T)}^2$. To shorten notation, we define the functions
\[
 \mathbf{X}_T: = \left( \Delta \bu_{\T} -  (\ue_{\T} \cdot \nabla) \ue_{\T} - \DIV \ue_{\T} \ue_{\T} - \nabla p_{\T} \right)_{|T},
 \quad \boldsymbol{\Upsilon}_T:= \varphi_T \mathbf{X}_T.
\]
Since $\varphi(z) = 0$, we immediately conclude that $\boldsymbol{\Upsilon}_T(z) = \varphi_T(z) \mathbf{X}_T(z) = \boldsymbol0$. We utilize the definitions of $\mathbf{X}_T$ and $\boldsymbol{\Upsilon}_T$ and invoke \eqref{eq:aux_bubble_T} to conclude that
\begin{multline}
\|  \Delta \bu_{\T} -  (\ue_{\T} \cdot \nabla) \ue_{\T} - \DIV \ue_{\T} \ue_{\T} - \nabla p_{\T} \|_{\bL^2(T)}^2 
\\
\lesssim 
\int_{R_T} |\mathbf{X}_T|^2 \varphi_T 
= 
\int_{T} \mathbf{X}_T \cdot  \boldsymbol{\Upsilon}_T .
\label{eq:integral_XTJT}
\end{multline}
Set, in identity \eqref{eq:indentity_Phi_residual_new}, the test function $\bv = \boldsymbol{\Upsilon}_T$, and use that $\boldsymbol{\Upsilon}_T(z) = 0$, and that $\boldsymbol{\Upsilon}_{T|S} = \boldsymbol0$ for $S \in \Sides_{T}$ to arrive at
\[
\int_{T} \mathbf{X}_T \cdot  \boldsymbol{\Upsilon}_T  = a(\boldsymbol{\Phi},\boldsymbol{\Upsilon}_T ).
\]
Since $\supp \boldsymbol{\Upsilon}_T \subset T$, a local version of estimate \eqref{eq:Ritz_Stability}  then implies that
\begin{multline*}
 a(\boldsymbol{\Phi},\boldsymbol{\Upsilon}_T ) \leq \left( \| \nabla \boldsymbol{e}_{\ue} \|_{\bL^2(\dist^{\alpha},T)} + \| e_{\pe} \|_{L^2(\dist^{\alpha},T)} + \| \nabla \ue \|_{\bL^2(\dist^{\alpha},T)} \| \nabla \boldsymbol{e}_{\ue} \|_{\bL^2(\dist^{\alpha},T)} \right.
\\
+
\left. \| \nabla \boldsymbol{e}_{\ue} \|_{\bL^2(\dist^{\alpha},T)} \| \nabla \ue_{\T} \|_{\bL^2(\dist^{\alpha},T)} \right)
\| \nabla \boldsymbol{\Upsilon}_T \|_{\bL^2(\dist^{-\alpha},T)}.
\end{multline*}
Next, we utilize the smallness assumption \eqref{eq:smallness_assumption}, which implies the estimate
\[
 \| \nabla \ue \|_{\bL^2(\dist^{\alpha},\Omega)} + \| \nabla \ue_{\T} \|_{\bL^2(\dist^{\alpha},\Omega)} \leq \frac{1-\lambda}{\|\mathcal{S}^{-1} \|C_{4\to2}^2},
\]
to obtain that
\begin{equation}
  a(\boldsymbol{\Phi},\boldsymbol{\Upsilon}_T ) \lesssim \left( \| \nabla \boldsymbol{e}_{\ue} \|_{\bL^2(\dist^{\alpha},T)} + \| e_{\pe} \|_{L^2(\dist^{\alpha},T)} \right) \| \nabla \boldsymbol{\Upsilon}_T \|_{\bL^2(\dist^{-\alpha},T)}.
\label{eq:bound_for_a_local}
\end{equation}
We replace this estimate into \eqref{eq:integral_XTJT} to derive
\begin{equation}
\| \mathbf{X}_T \|_{\bL^2(T)}^2 
\lesssim \left( \| \nabla \boldsymbol{e}_{\ue} \|_{\bL^2(\dist^{\alpha},T)} + \| e_{\pe} \|_{L^2(\dist^{\alpha},T)} \right) \| \nabla \boldsymbol{\Upsilon}_T \|_{\bL^2(\dist^{-\alpha},T)}.
\label{eq:estimate_XT}
\end{equation}
We now recall that $\boldsymbol{\Upsilon}_T:= \varphi_T \mathbf{X}_T$ and invoke estimate \eqref{eq:chi_bubble_T} to arrive at
\[
 \| \nabla \boldsymbol{\Upsilon}_T \|_{\bL^2(\dist^{-\alpha},T)} \lesssim h_T^{-1}D_T^{-\alpha/2}\| \mathbf{X}_T \|_{\bL^2(T)}.
\]
The previous two estimates yield the desired bound on the first term
\begin{multline}
h_T^{2}D_T^{\alpha} \|  \Delta \bu_{\T} -  (\ue_{\T} \cdot \nabla) \ue_{\T} - \DIV \ue_{\T} \ue_{\T} - \nabla \pe_{\T} \|_{\bL^2(T)}^2
\\
\lesssim \| \nabla \boldsymbol{e}_{\ue} \|^2_{\bL^2(\dist^{\alpha},T)} + \| e_{\pe} \|^2_{L^2(\dist^{\alpha},T)}.
\label{eq:estimate_interior_residual}
\end{multline}

\noindent \emph{Step 2}. In this step we control the jump term 
$h_T D_T^{\alpha} \| \llbracket(\GRAD \ue_\T - \pe_{\T} \mathbf{I}) \cdot \boldsymbol{\nu}\rrbracket \|_{\bL^2(S)}^{2}$.
 Let $T \in \T$ and $S \in \Sides_T$. Define $\boldsymbol{\Lambda}_S =   \varphi_S \llbracket (\GRAD \ue_\T - \pe_{\T} \mathbf{I}) \cdot \boldsymbol{\nu}\rrbracket$ so that, using \eqref{eq:bubble_S}, we have
\begin{equation}
\begin{aligned}
 \|  \llbracket (\nabla \bu_{\T}  - \pe_{\T} \mathbf{I}) \cdot \boldsymbol{\nu}\rrbracket  \|_{\bL^2(S)}^{2} & \lesssim \int_S |  \llbracket (\nabla \bu_{\T}  - \pe_{\T} \mathbf{I}) \cdot \boldsymbol{\nu}\rrbracket |^2 \varphi_S 
 \\
 & = \int_S  \llbracket (\nabla \bu_{\T}  - \pe_{\T} \mathbf{I}) \cdot \boldsymbol{\nu}\rrbracket \cdot \boldsymbol{\Lambda}_S.
 \end{aligned}
 \label{eq:estimate_jump_aux}
\end{equation}
Setting $\bv = \boldsymbol{\Lambda}_S$ in \eqref{eq:indentity_Phi_residual_new} and using that $\boldsymbol{\Lambda}_S(z) = \boldsymbol0$ yields
\[
 \int_S   \llbracket (\nabla \bu_{\T}  - \pe_{\T} \mathbf{I}) \cdot \boldsymbol{\nu}\rrbracket  \cdot \boldsymbol{\Lambda}_S =   a(\boldsymbol{\Phi},\boldsymbol{\Lambda}_S) - \sum_{T' \in \mathcal{N}_S} \int_{T'} \mathbf{X}_{T'} \cdot \boldsymbol{\Lambda}_S.
\]
Now, since $\supp (\boldsymbol{\Lambda}_S) \subset R_S:= \supp(\varphi_S) \subset T_{*} \cup T_{*}' \subset \cup\{T' : T' \in \mathcal{N}_{S}\}$,  similar arguments to the ones that led to \eqref{eq:bound_for_a_local} allow us to obtain
\begin{multline*}
 \int_S   \llbracket (\nabla \bu_{\T}  - \pe_{\T} \mathbf{I}) \cdot \boldsymbol{\nu}\rrbracket  \cdot \boldsymbol{\Lambda}_S \leq \\
 \sum_{T' \in \mathcal{N}_S} \| \nabla \boldsymbol{\Phi} \|_{\bL^2(\dist^{\alpha},T')} \| \nabla \boldsymbol{\Lambda}_S \|_{\bL^2(\dist^{-\alpha},T')} 
 + \sum_{T' \in \mathcal{N}_S} \| \mathbf{X}_{T'} \|_{\bL^2(\dist^{\alpha},T')} \| \boldsymbol{\Lambda}_S \|_{\bL^2(\dist^{-\alpha},T')}
\lesssim \\
\sum_{T' \in \mathcal{N}_S} 
\left( \| \nabla \boldsymbol{e}_{\ue} \|_{\bL^2(\dist^{\alpha},T')} + \| e_{\pe} \|_{L^2(\dist^{\alpha},T')} \right) 
\| \nabla \boldsymbol{\Lambda}_S \|_{\bL^2(\dist^{-\alpha},T')} \\ + \sum_{T' \in \mathcal{N}_S} \| \mathbf{X}_{T'} \|_{\bL^2(T')} \| \boldsymbol{\Lambda}_S \|_{\bL^2(T')}.
\end{multline*}
By shape regularity we have that
\[
  \| \boldsymbol{\Lambda}_S \|_{\bL^2(T')} \approx |T'|^{\frac{1}{2}} |S|^{-\frac{1}{2}} \| \boldsymbol{\Lambda}_S \|_{\bL^2(S)} \approx h_{T'}^{\frac{1}{2}} \| \boldsymbol{\Lambda}_S \|_{\bL^2(S)}.
\]
 This, estimate \eqref{eq:chi_bubble_S}, and the bound on $\mathbf{X}_{T'}$ derived in \eqref{eq:estimate_interior_residual} yield
\begin{multline*}
 \int_S   \llbracket (\nabla \bu_{\T}  - \pe_{\T} \mathbf{I}) \cdot \boldsymbol{\nu}\rrbracket  \cdot \boldsymbol{\Lambda}_S 
 \lesssim \\
 \sum_{T' \in \mathcal{N}_S} \left( \| \nabla \boldsymbol{e}_{\ue} \|_{\bL^2(\dist^{\alpha},T')} + \| e_{\pe} \|_{L^2(\dist^{\alpha},T')} \right)  
h_T^{-\frac{1}{2}} D_T^{-\frac{\alpha}{2}} \| \boldsymbol{\Lambda}_S \|_{\bL^2(S)}.
\end{multline*}
We replace the previous estimate in \eqref{eq:estimate_jump_aux} to arrive at
\begin{equation}
 h_T D_T^{\alpha}\|  \llbracket (\nabla \bu_{\T}  - p_{\T} \mathbf{I}) \cdot \boldsymbol{\nu}\rrbracket  \|_{\bL^2(S)}^{2} \lesssim \sum_{T' \in \mathcal{N}_S} \left( \| \nabla \boldsymbol{e}_{\ue} \|^2_{\bL^2(\dist^{\alpha},T')} + \| e_{\pe} \|^2_{L^2(\dist^{\alpha},T')} \right) ,
 \label{eq:estimate_jump}
\end{equation}
and since every $T \in \T$ belongs to at most two $\mathcal{N}_S$ for $S \in \Sides$, we can conclude.

\noindent \emph{Step 3}. We now bound the residual term associated with the incompressibility constraint. Since $\DIV \ue = 0$, for any $T \in \T$, we immediately arrive at
\begin{equation}
 \| \DIV \ue_{\T} \|_{L^2(\dist^{\alpha},T)} =  \| \DIV \boldsymbol{e}_{\ue}  \|_{L^2(\dist^{\alpha},T)} \lesssim \| \nabla \boldsymbol{e}_{\ue} \|_{\bL^2(\dist^{\alpha},T)}. 
 \label{eq:estimate_div}
\end{equation}

\noindent \emph{Step 4}. We now bound the term associated with the singular source. Let $T \in \T$ and note that, if $T \cap \{ z\} = \emptyset$, then there is nothing to prove.  Otherwise, we must obtain a bound for the term $h_{T}^{\alpha} | \bF |^2 $. To do so we follow the arguments developed in the proof of \cite[Theorem 5.3]{AGM} that yield the existence of a smooth function $\eta$ such that
\begin{equation}
 \eta(z) = 1,\quad \| \eta \|_{L^{\infty}(\Omega)} = 1, \quad \| \nabla \eta \|_{\bL^{\infty}(\Omega)} \lesssim h_T^{-1},
 \quad \supp(\eta) \subset \mathcal{N}_{T}.
\end{equation}
Define $\bv_{\eta}:= \bF \eta \in \bH_0^1(\dist^{-\alpha},\Omega)$. Since $(\ue,\pe)$ and $(\boldsymbol{\Phi},\psi)$ solve \eqref{eq:NSEVar} and \eqref{eq:Ritz}, respectively, we obtain
\begin{align*}
| \bF |^2 = \langle \bF \delta_z, \bv_{\eta} \rangle  & =
 a(\ue,\bv_{\eta}) + b_-(\bv_{\eta},\pe) + c(\ue,\ue;\bv_{\eta})
 \\
& =  a(\boldsymbol{\Phi}, \bv_{\eta}) + a(\ue_{\T},\bv_{\eta}) + b_-(\bv_{\eta},\pe_{\T}) + c(\ue_{\T},\ue_{\T};\bv_{\eta}).
\end{align*}
Since $\supp(\eta) \subset \mathcal{N}_{T}$, we apply similar arguments to the ones that led to \eqref{eq:bound_for_a_local}, integration by parts, and basic estimates to arrive at
\begin{align*}
| \bF |^2 
&  \lesssim \left( \| \GRAD \be_{\bu} \|_{\bL^2(\dist^{\alpha},\mathcal{N}_T)} + \| e_{\pe} \|_{L^2(\dist^{\alpha},\mathcal{N}_T)} \right) \| \GRAD \bv_{\eta} \|_{\bL^2(\dist^{-\alpha},\mathcal{N}_T)} 
\\
& + 
\sum_{T' \in \T: T' \subset \mathcal{N}_T} \| \Delta \bu_{\T} -  (\ue_{\T} \cdot \nabla) \ue_{\T} - \DIV \ue_{\T} \ue_{\T} - \nabla \pe_{\T}  \|_{\bL^2(T')} 
\| \bv_{\eta} \|_{\bL^2(T')} 
\\
& + 
\sum_{T' \in \T: T' \subset \mathcal{N}_T} \sum_{S\in \Sides_{T'}: S \not\subset \partial \mathcal{N}_T} \|  \llbracket (\nabla \bu_{\T}  - \pe_{\T} \mathbf{I}) \cdot \boldsymbol{\nu}\rrbracket  \|_{\bL^2(S)}  
\| \bv_{\eta} \|_{\bL^2(S)}.
\end{align*}
We now use the estimates 
\begin{equation*}
\| \eta\|_{L^2(S)} \lesssim h_T^{\frac{d-1}{2}},
\quad
\| \eta \|_{L^2(\mathcal{N}_T)} \lesssim  h_T^{\frac{d}{2}},
\quad
\| \GRAD \eta \|_{\bL^2(\dist^{-\alpha},\mathcal{N}_T)} \lesssim h_T^{\frac{d-2}{2}-\frac{\alpha}{2}},
\end{equation*}
and the fact that, since $z \in T$, we have $h_T \approx D_T$, to assert the bound
\begin{multline}
 | \bF |^2 \lesssim h_T^{\frac{d-2}{2}-\frac{\alpha}{2}} |\bF|  \left( \| \GRAD \be_{\bu} \|^2_{\bL^2(\dist^{\alpha},\mathcal{N}_T)} + \| e_{\pe} \|^2_{L^2(\dist^{\alpha},\mathcal{N}_T)} \right)^{\frac{1}{2}}
 \\
 + h_T^{\frac{d-2}{2}-\frac{\alpha}{2}} |\bF|  \Bigg ( \sum_{T' \in \T: T' \subset \mathcal{N}_T} h_{T'} D_{T'}^{\frac{\alpha}{2}}  \|\Delta \bu_{\T} -  (\ue_{\T} \cdot \nabla) \ue_{\T} - \DIV \ue_{\T} \ue_{\T} - \nabla \pe_{\T} \|_{\bL^2(T')}
 \\
 + \sum_{T' \in \T: T' \subset \mathcal{N}_{T'}} \sum_{S\in \Sides_{T'}: S \not\subset \partial \mathcal{N}_T} D_{T'}^{\frac{\alpha}{2}} h_{T'}^{\frac{1}{2}} \|  \llbracket (\nabla \bu_{\T}  - \pe_{\T} \mathbf{I}) \cdot \boldsymbol{\nu}\rrbracket  \|_{\bL^2(S)} \Bigg).
\end{multline}
Invoke \eqref{eq:estimate_interior_residual} and \eqref{eq:estimate_jump} to conclude.

\noindent \emph{Step 5}. Collect the estimates derived in the previous steps to arrive at the desired local efficiency estimate \eqref{eq:local_lower_bound}.
\end{proof}

\section{Numerical results}
\label{sec:numerics}
In this section we present a series of numerical examples that illustrate the performance of the devised error estimator $\E_{\alpha}(\bu_{\T},\pe_{\T};\T)$ defined in \eqref{eq:global_estimator}. In some
of these examples, we go beyond the presented theory and perform numerical experiments where we violate the assumption of homogeneous Dirichlet boundary conditions. The examples been carried out with the help of a code that we implemented using \texttt{C++}.  All matrices have been assembled exactly and global linear systems were solved using the multifrontal massively parallel sparse direct solver (MUMPS) \cite{MR1856597,MR2202663}. The right hand sides and terms involving the weight, and  the  approximation errors, are computed by a quadrature formula which is exact for polynomials of degree nineteen (19).

For a given partition $\T$, we solve the discrete problem \eqref{eq:NSEh} with the discrete spaces \eqref{eq:th_V}--\eqref{eq:th_P}. This setting will be referred to as Taylor--Hood approximation. To obtain the solution of \eqref{eq:NSEh} we use a fixed--point strategy, which is described in \textbf{Algorithm}~\ref{Algorithm-Fixed-Point}. In this algorithm, the initial guess is obtained as a discrete approximation of the solution to a Stokes problem with singular sources \cite{MR3892359} and $\textrm{tol}=10^{-8}$. Once the discrete solution $(\bu_{\T},\pe_{\T})$ is obtained, we compute, for $T \in \T$, the a posteriori error indicators $\E_{\alpha}(\bu_{\T},\pe_{\T};T)$, given in \eqref{eq:local_indicator}, to drive the adaptive mesh refinement procedure described in  \textbf{Algorithm}~\ref{Algorithm}. Every mesh $\T$ is adaptively refined by marking for refinement the elements $T \in \T$ that are such that the step 3 in \textbf{Algorithm}~\ref{Algorithm} holds. A sequence of adaptively refined meshes is thus generated from the initial meshes shown in Figure \ref{fig:meshes0}.

We define the total number of degrees of freedom as $\textsf{Ndof}:=\dim \mathbf{V}(\T)  + \dim \mathcal{P}(\T)$. We recall that the discrete spaces $\mathbf{V}(\T)$ and $\mathcal{P}(\T)$ are as in \eqref{eq:th_V} and \eqref{eq:th_P}, respectively. 

\begin{figure}
\centering
\begin{minipage}{0.16\linewidth}
\centering
\includegraphics[trim={0 0 0 0},clip,width=2cm,height=2cm,scale=1]{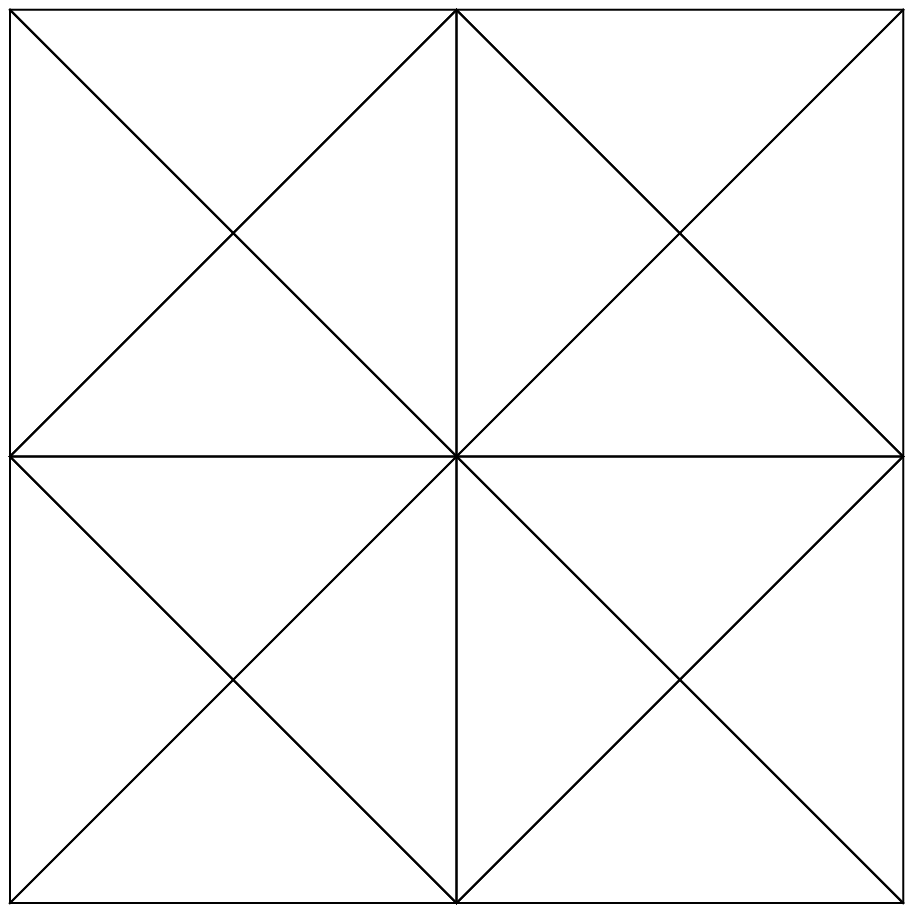}
\\
(a)
\end{minipage}
\begin{minipage}{0.16\linewidth}
\centering
\includegraphics[trim={0 0 0 0},clip,width=2cm,height=2cm,scale=1]{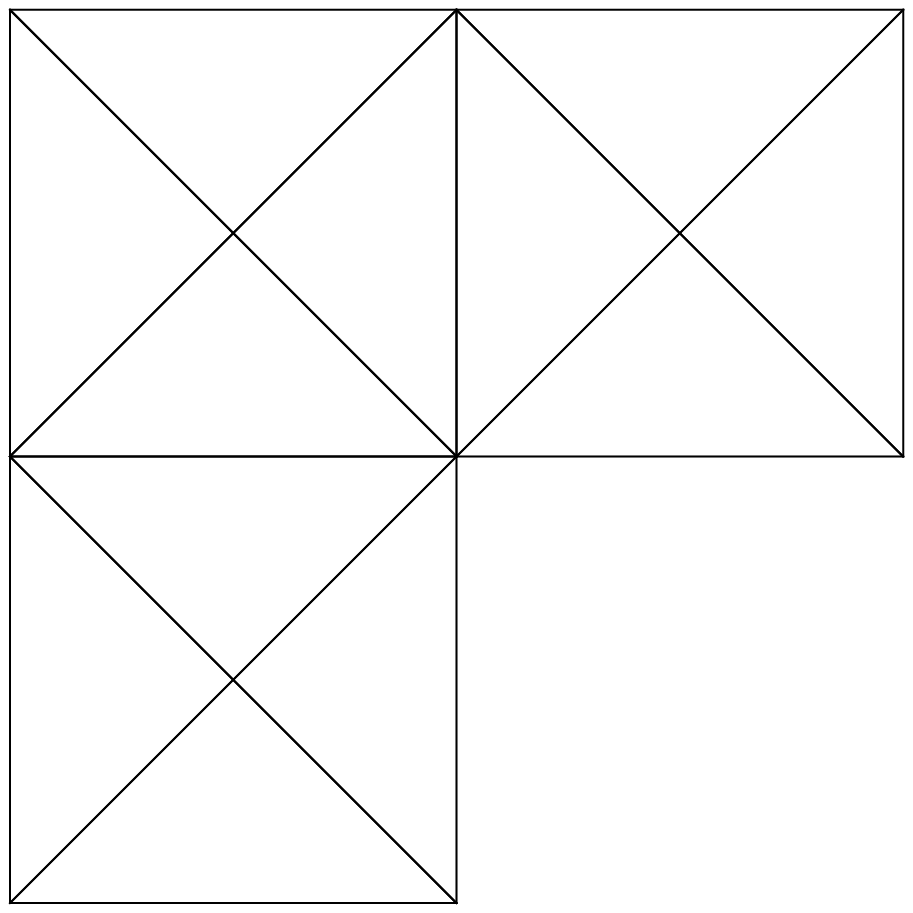}
\\
(b)
\end{minipage}
\begin{minipage}{0.3\linewidth}
\centering
\includegraphics[trim={0 0 0 0},clip,width=3.7cm,height=2cm,scale=1]{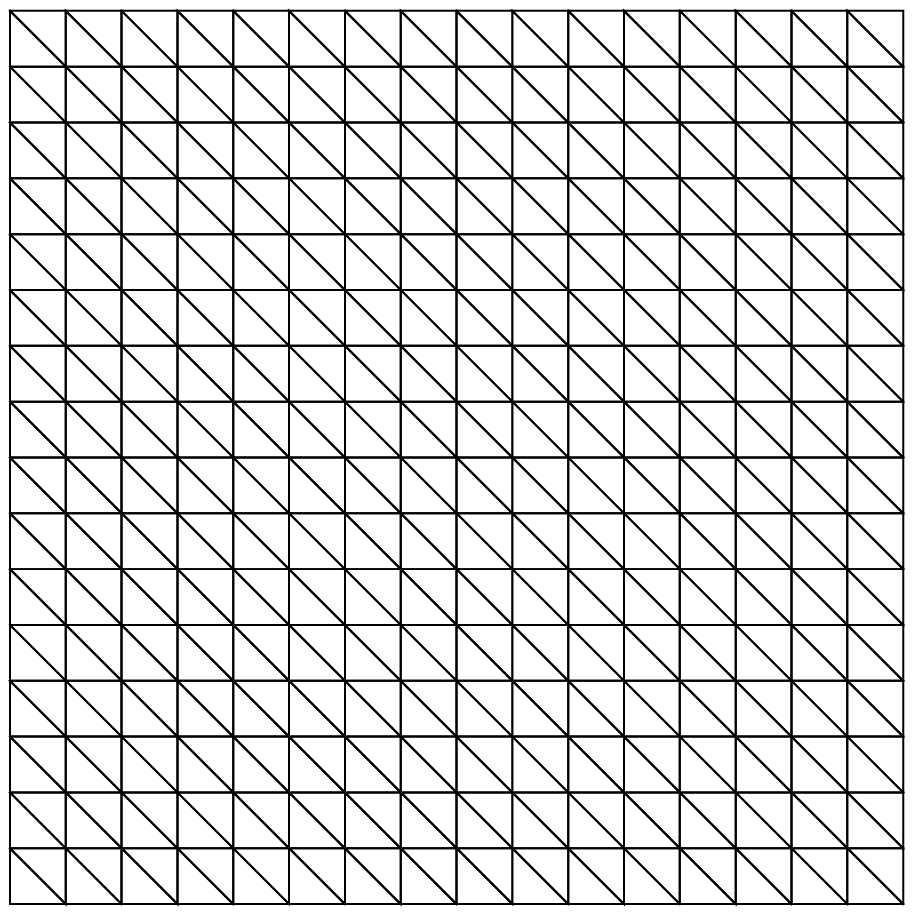}
\\
(c)
\end{minipage}
\begin{minipage}{0.33\linewidth}
\centering
\includegraphics[trim={0 0 0 0},clip,width=4cm,height=2.05cm,scale=1]{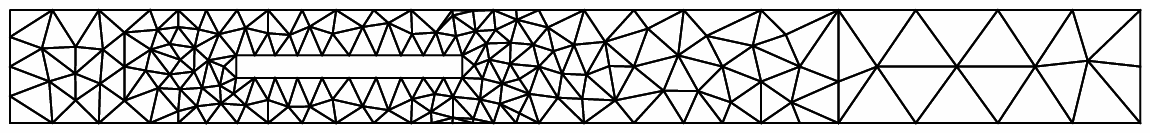}
\\
(d)
\end{minipage}
\caption{The initial meshes $\T_{0}$ used in the adaptive \textbf{Algorithm}~\ref{Algorithm} when \textrm{(a)} $\Omega=(0,1)^{2}$, \textrm{(b)} $\Omega=(-1,1)^{2}\setminus[0,1)\times[-1,0)$, \textrm{(c)} $\Omega=(0,4)\times(0,1)$, and \textrm{(d)} $\Omega=(0,10)^{2}\setminus[2,4]\times[0.4,0.6]$.}
\label{fig:meshes0}
\end{figure}

\footnotesize{
\begin{algorithm}[ht]
\caption{\textbf{Adaptive Algorithm}}
\label{Algorithm}
Input: Initial mesh $\T_{0}$, interior point $z\in\Omega$, and $\alpha\in(0,2)$;\\
\textbf{1:} Solve the discrete problem \eqref{eq:NSEh} by using \textbf{Algorithm} \ref{Algorithm-Fixed-Point}; \\  
\textbf{2:} For each $T \in \mathscr{T}$ compute the local error indicators $\E_\alpha(\bu_{\T},p_{\T};T)$ given in \eqref{eq:local_indicator};
\\
\textbf{3:} Mark an element $T \in \T$ for refinement if 
\vspace*{-0.3cm}
\[
  \E_\alpha(\bu_{\T},p_{\T};T) > \frac12 \max_{T' \in \T} \E_\alpha(\bu_{\T},p_{\T};T');
  \vspace*{-0.3cm}
\]
\textbf{4:} From step $\boldsymbol{3}$, construct a new mesh, using a longest edge bisection algorithm. Set $i \leftarrow i + 1$, and go to step $\boldsymbol{1}$.
\end{algorithm}}
%%%%%%%%%%%%%%%%%%%%%%%%%%%%%%%%%5
\footnotesize{
\begin{algorithm}[ht]
\caption{\textbf{Fixed-Point Algorithm}}
\label{Algorithm-Fixed-Point}
Input: Initial guess $(\bu_{\T}^{0},\pe_{\T}^{0})\in \mathbf{V}(\T)\times \mathcal{P}(\T)$ and $\textrm{tol}$. Set $i=1$;\\
\textbf{1:} Find $(\bu_{\T}^{i},p_{\T}^{i})\in \mathbf{V}(\T)\times \mathcal{P}(\T)$ such that \vspace*{-0.3cm}
\begin{equation*}
a(\bu_{\T}^{i},\bv_{\T}) + b_-(\bv_{\T}, \pe_{\T}^{i}) + c( \bu_{\T}^{i-1}, \bu_{\T}^{i};  \bv_{\T}) = \bF \cdot \bv_{\T}(z),
\quad
b_+(\bu_{\T}^{i},q_{\T}) = 0,\vspace*{-0.2cm}
\end{equation*}
for all $(\bv_{\T},q_{\T})\in \mathbf{V}(\T)\times \mathcal{P}(\T)$;
\\
\textbf{2:} If $|(\bu_{\T}^{i},p_{\T}^{i})-(\bu_{\T}^{i-1},p_{\T}^{i-1})|>\textrm
{tol}$, set $i \leftarrow i + 1$, and go to step $\boldsymbol{1}$. Otherwise, \textbf{return} $(\bu_{\T},\pe_{\T})=(\bu_{\T}^{i},\pe_{\T}^{i})$.
\end{algorithm}}
\normalsize

\subsection{Convex and non--convex domains with homogeneous boundary conditions} 
We first explore the performance of the devised a posteriori error estimator in problems  with homogeneous boundary conditions on convex and non-convex domains $\Omega$.

\subsubsection{Convex domain} We set $\Omega=(0,1)^{2}$, $z=(0.5,0.5)^\intercal$, and $\bF=(1,1)^\intercal$. We explore the performance of $\E_{\alpha}$ when driving
the adaptive procedure of \textbf{Algorithm} \ref{Algorithm}. We also investigate the effect of varying
the exponent $\alpha$ in the Muckenhoupt weight. To accomplish this task, we consider $\alpha\in\{0.25,0.75,1.0,1.25,1.25,1.5,1.75\}$. 

In Figure \ref{fig:convergence_alpha} we present the experimental rates of convergence for the error estimator $\E_{\alpha}$. We observe that optimal experimental rates of convergence are attained for all the values of the parameter $\alpha$ that we considered. We also observe that most of the refinement is concentrated around the singular source point. 

\begin{figure}
\begin{minipage}[c]{.5\textwidth}
\centering
\psfrag{estimador 0.25}{$\E_{0.25}(\bu_{\T},\pe_{\T};\T)$}
\psfrag{estimador 0.75}{$\E_{0.75}(\bu_{\T},\pe_{\T};\T)$}
\psfrag{estimador 1.0}{$\E_{1.0}(\bu_{\T},\pe_{\T};\T)$}
\psfrag{estimador 1.25}{$\E_{1.25}(\bu_{\T},\pe_{\T};\T)$}
\psfrag{estimador 1.5}{$\E_{1.5}(\bu_{\T},\pe_{\T};\T)$}
\psfrag{estimador 1.75}{$\E_{1.75}(\bu_{\T},\pe_{\T};\T)$}
\psfrag{rate(h2)}{$\textrm{Ndof}^{-1}$}
\centering
\includegraphics[trim={0 0 0 0},clip,width=6.5cm,height=5.5cm,scale=0.8]{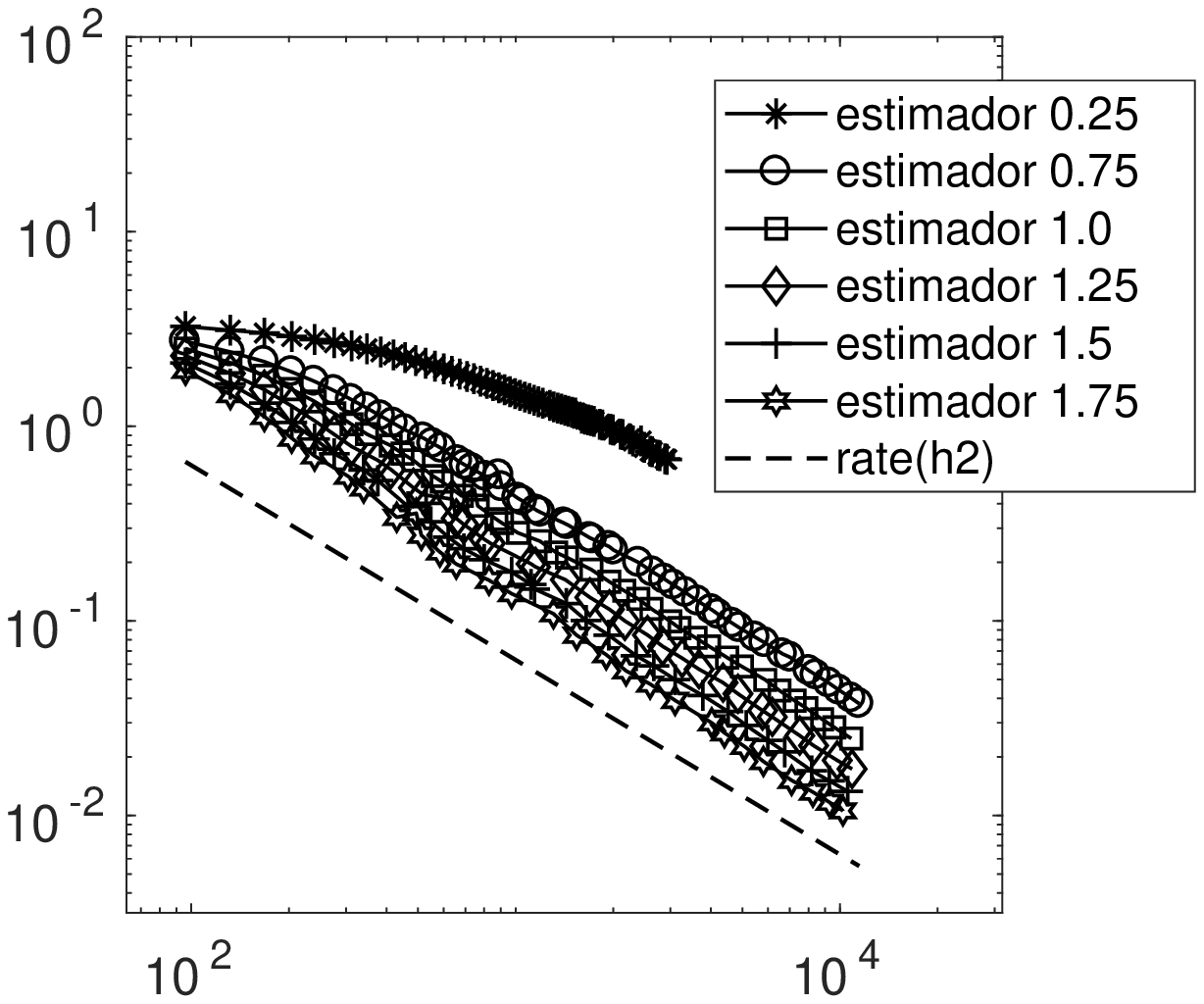}
\\
$\textrm{Ndof}$
\end{minipage}
\hfill
\begin{minipage}[c]{.4\textwidth}
\centering
\includegraphics[trim={0 0 0 0},clip,width=5cm,height=5cm,scale=0.8]{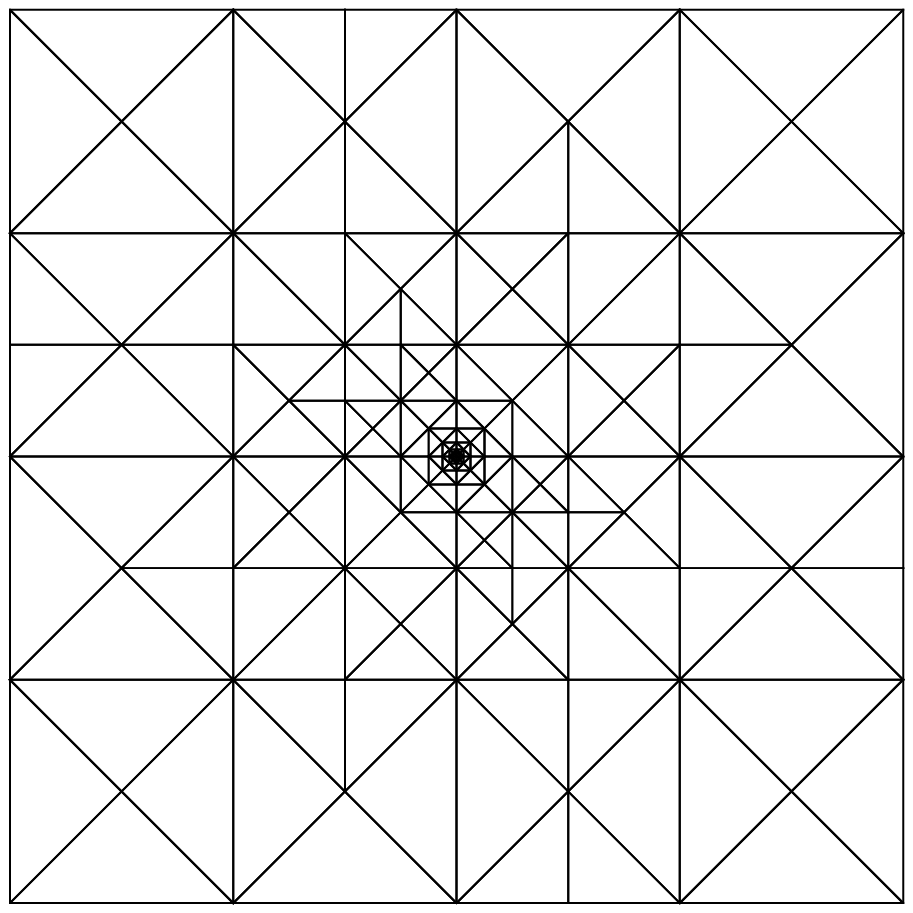}
\\
$~$
\end{minipage}
\caption{\textbf{Convex domain:} Experimental rates of convergence for the error estimator $\E_{\alpha}(\bu_{\T},\pe_{\T};\T)$ considering $\alpha\in\{0.25.0.5,1.0,1.25,1.5,1.75\}$ (left) and the mesh obtained after 15 adaptive refinements with $\alpha=1.5$ (right). The mesh contains 204 elements and 113 vertices.}
\label{fig:convergence_alpha}
\end{figure}

%%%%%%%%%%%%%%%%%%%%%%%%%%%

\subsubsection{Non-convex domain} We set $\Omega=(-1,1)^{2}\setminus[0,1)\times[-1,0)$, an L--shaped domain, $z=(0.5,0.5)^\intercal$, and $\bF=(1,1)^\intercal$. We again consider different values of the exponent $\alpha$ of the Muckenhoupt weight $\dist^{\alpha}$ defined in \eqref{distance_A2}. We consider $\alpha\in\{0.25,0.75,1.0,1.25,1.25,1.5,1.75\}$. 

In Figure \ref{fig:convergence_alpha_L} we present the experimental rates of convergence for the error estimator $\E_{\alpha}$ and the mesh obtained after 30 adaptive refinements with $\alpha = 1.5$. We observe that, for all the values of the parameter $\alpha$ that we have considered, optimal experimental rates of convergence are attained. We also observe that most of the refinement is concentrated around the singular source point and the reentrant corner. 

\begin{figure}
\begin{minipage}[c]{.5\textwidth}
\centering
\psfrag{estimador 0.25}{$\E_{0.25}(\bu_{\T},\pe_{\T};\T)$}
\psfrag{estimador 0.75}{$\E_{0.75}(\bu_{\T},\pe_{\T};\T)$}
\psfrag{estimador 1.0}{$\E_{1.0}(\bu_{\T},\pe_{\T};\T)$}
\psfrag{estimador 1.25}{$\E_{1.25}(\bu_{\T},\pe_{\T};\T)$}
\psfrag{estimador 1.5}{$\E_{1.5}(\bu_{\T},\pe_{\T};\T)$}
\psfrag{estimador 1.75}{$\E_{1.75}(\bu_{\T},\pe_{\T};\T)$}
\psfrag{rate(h2)}{$\textrm{Ndof}^{-1}$}
\centering
\includegraphics[trim={0 0 0 0},clip,width=5.5cm,height=5.5cm,scale=0.8]{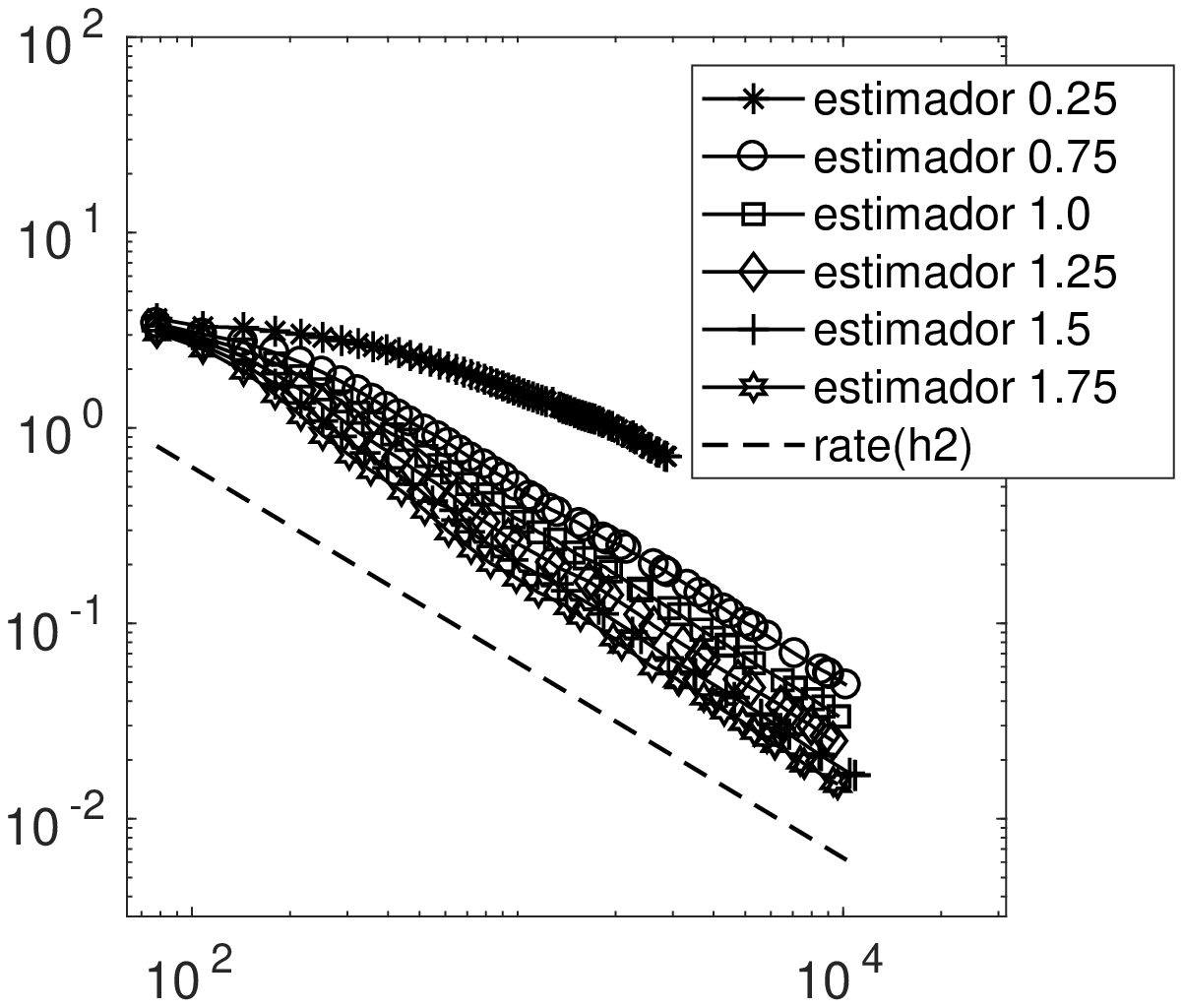}
\\
$\textrm{Ndof}$
\end{minipage}
\hfill
\begin{minipage}[c]{.4\textwidth}
\centering
\includegraphics[trim={0 0 0 0},clip,width=5cm,height=5cm,scale=0.8]{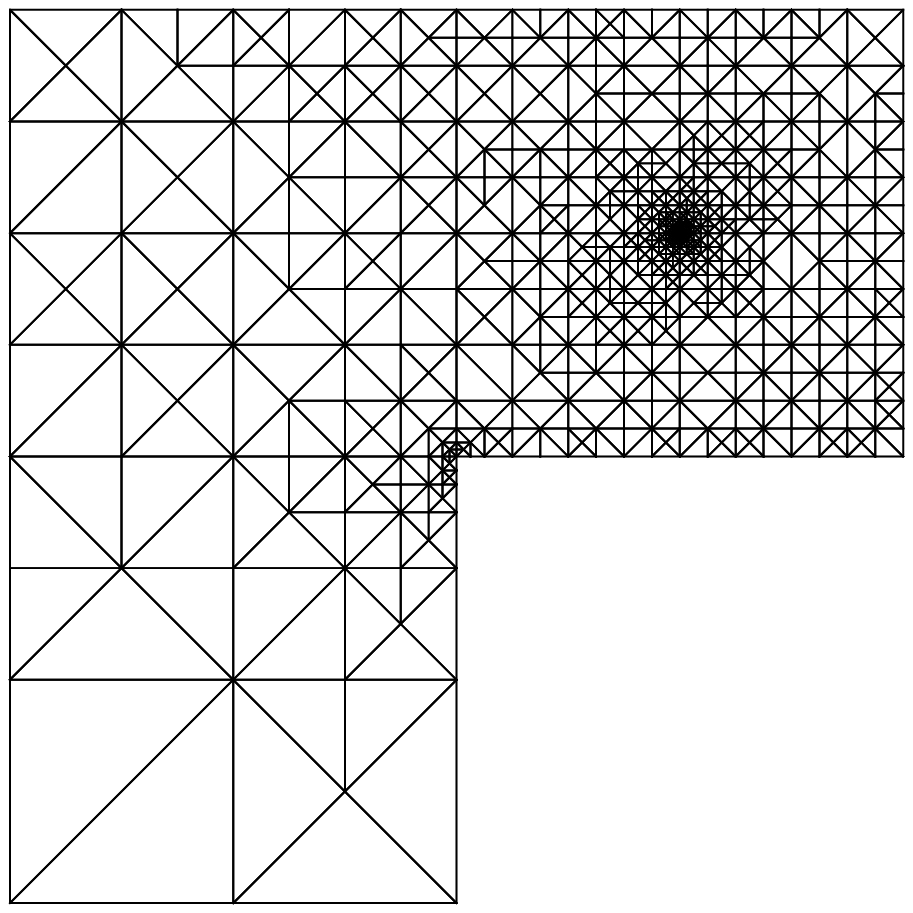}
\\
$~$
\end{minipage}
\caption{\textbf{Non-convex domain:} Experimental rates of convergence for the error estimator $\E_{\alpha}(\bu_{\T},\pe_{\T};\T)$ considering $\alpha\in\{0.25.0.5,1.0,1.25,1.5,1.75\}$ (left) and the mesh obtained after 30 adaptive refinements with $\alpha=1.5$ (right). The mesh contains 1208 elements and 639 vertices.}
\label{fig:convergence_alpha_L}
\end{figure}

%%%%%%%%%%%%%%%%%%%%%%%%%%%%
%%%%%%%%%%%%%%%%%%%%%%%%%%%%

\subsection{A series of Dirac sources}

We now go beyond the presented theory and include a series of Dirac delta sources on the right--hand side of the momentum equation. To be precise, we will replace the momentum equation in \eqref{eq:NSEStrong} by
\begin{equation}\label{eq:NSEStrongSS}
-\Delta \ue + (\ue \cdot \GRAD) \ue + \GRAD \pe = \sum_{z\in\mathcal{Z}}\bF_{z}\delta_{z} \text{ in } \Omega, 
\end{equation} 
where $\mathcal{Z}\subset\Omega$ denotes a finite set with cardinality $\#\mathcal{Z}$ which is such that $1< \#\mathcal{Z}$ and $\{\bF_z\}_{z \in \mathcal{Z}} \subset \R^2$. We introduce the weight \cite[Section 5]{MR3679932}
\begin{equation}\label{new_A2}
 \rho(x)=\left\{
 \begin{array}{lc}
 \dist^{\alpha}, & \exists~z\in\mathcal{Z}:|x-z|<\frac{d_{\mathcal{Z}}}{2} ,\\
 1, & |x-z|\geq\frac{d_{\mathcal{Z}}}{2},~\forall~z\in\mathcal{Z},
 \end{array}
 \right.
 \end{equation}
 where
 $ 
 d_{\mathcal{Z}}=
 \min
 \left\{
 \textsf{dist}(\mathcal{Z},\partial\Omega),\min\left\{|z-z'|:z,z'\in\mathcal{Z},z\neq z'\right\}
 \right\}
 $.
This weight belongs to the Muckenhoupt class $A_2$ \cite{MR3215609} and to the restricted class $A_2(\Omega)$. With the weight $\rho$ at hand, we modify the definition \eqref{XandY} of the spaces $\mathcal{X}$ and $\mathcal{Y}$ as follows:
\begin{equation}
 \label{new_XandY}
 \mathcal{X} = \bH^{1}_0(\rho,\Omega) \times L^2(\rho,\Omega)/ \mathbb{R}, \quad
 \mathcal{Y} = \bH^{1}_0(\rho^{-1},\Omega) \times L^2(\rho^{-1},\Omega)/ \mathbb{R}.
 \end{equation}
 
Define 
 \begin{equation}
\label{eq:DT_new}
  D_{T,\mathcal{Z}} := \min_{z \in \mathcal{Z}} \left\{  \max_{x \in T} |x-z| \right\}.
\end{equation}

 We propose the following error estimator when the Taylor--Hood scheme is considered:
$$
\mathscr{D}_{\alpha}(\bu_{\T},p_{\T};\T):=\left(\sum_{T\in\T}\mathscr{D}_{\alpha}^{2}(\bu_{\T},p_{\T};T)\right)^{\frac{1}{2}},
$$
where the local indicators are such that
 \begin{multline}
\mathscr{D}_{\alpha}(\bu_{\T},p_{\T};T):= \bigg( h_T^2D_{T,\mathcal{Z}}^{\alpha}   \|    \Delta \bu_{\T} -  (\ue_{\T} \cdot \nabla) \ue_{\T} - \DIV \ue_{\T} \ue_{\T} - \nabla \pe_{\T} \|_{\bL^2(T)}^2 
\\
+  \|  \DIV \bu_{\T} \|_{L^2(\rho,T)}^2 
+ h_T D_{T,\mathcal{Z}}^{\alpha}\| \llbracket (\nabla \bu_{\T}  - \pe_{\T} \mathbf{I})\rrbracket \cdot \boldsymbol{\nu} \|_{\bL^2(\partial T \setminus \partial \Omega)}^2 + \sum_{z \in \mathcal{Z}\cap T} h_{T}^{\alpha} | \bF_{z} |^2 \bigg)^{\frac{1}{2}}.
\label{eq:local_indicator_s}
\end{multline} 

\subsubsection{Convex domain with four Delta sources} We set $\Omega=(0,1)^{2}$ and
$$
\mathcal{Z}=\{(0.25,0.25),(0.25,0.75),(0.75,0.25),(0.75,0.75)\}.
$$
We consider $\bF_{z}=(1,1)^\intercal$ for all $z \in \mathcal{Z}$ and fix the exponent of the Muckenhoupt weight $\rho$ as $\alpha=1.5$.

In Figure \ref{fig:4points} we present the experimental rates of convergence for the error estimator $\mathscr{D}_{\alpha}$ and the mesh obtained after 30 adaptive refinements with $\alpha = 1.5$. It can be observed that the devised a posteriori error estimator exhibits an optimal experimental rate of convergence. It can also be observed that most of the refinement is concentrated around the singular source points. 
In Figure \ref{fig:4pointFEM}, we present the finite element approximations of $|\bu_{\T}|$ and $\pe_{\T}$ over the mesh that is obtained after 30 iterations of our adaptive loop with $\alpha=1.5$.

\begin{figure}
\centering
\psfrag{estimador 1.5}{$\mathscr{D}_{1.5}(\bu_{\T},\pe_{\T};\T)$}
\psfrag{rate(h2)}{$\textrm{Ndof}^{-1}$}
\includegraphics[trim={0 0 0 0},clip,width=5cm,height=5cm,scale=0.8]{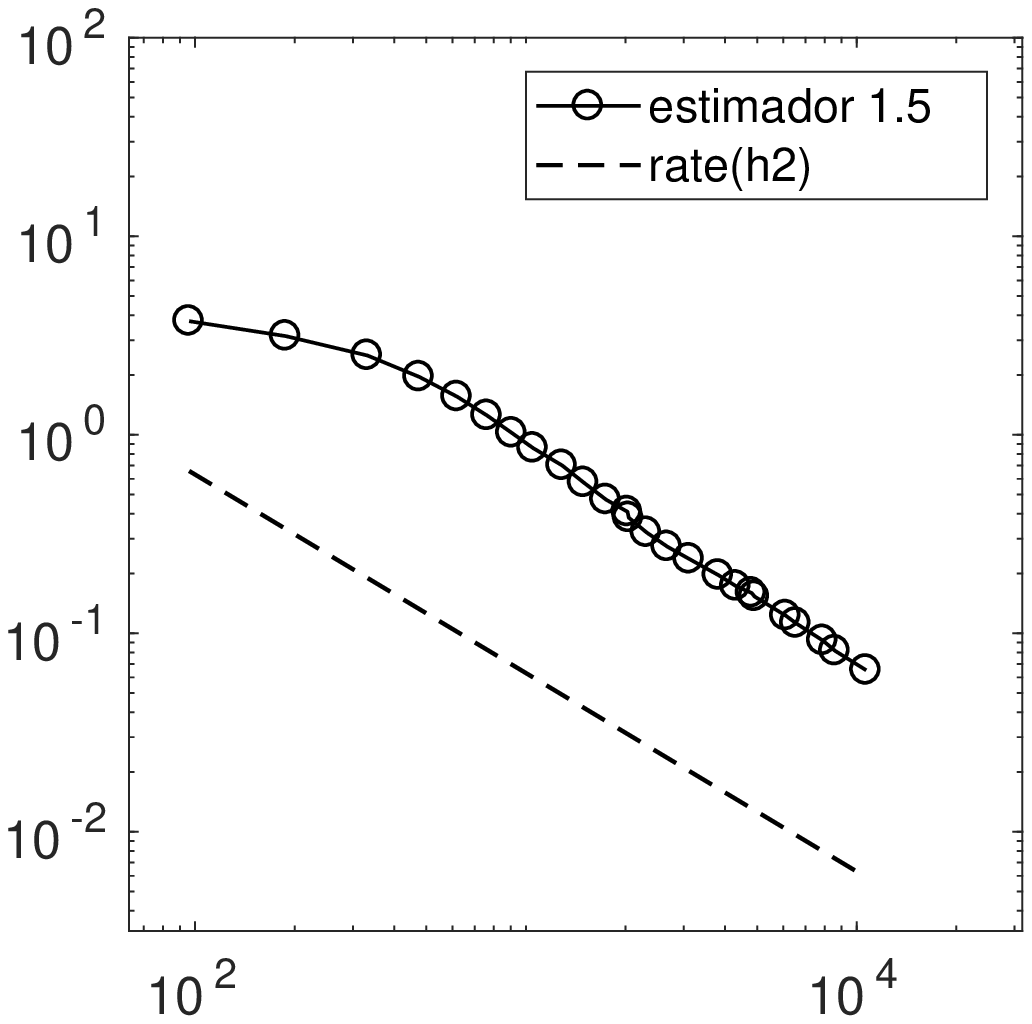}
\quad
\includegraphics[trim={0 0 0 0},clip,width=5cm,height=5cm,scale=0.8]{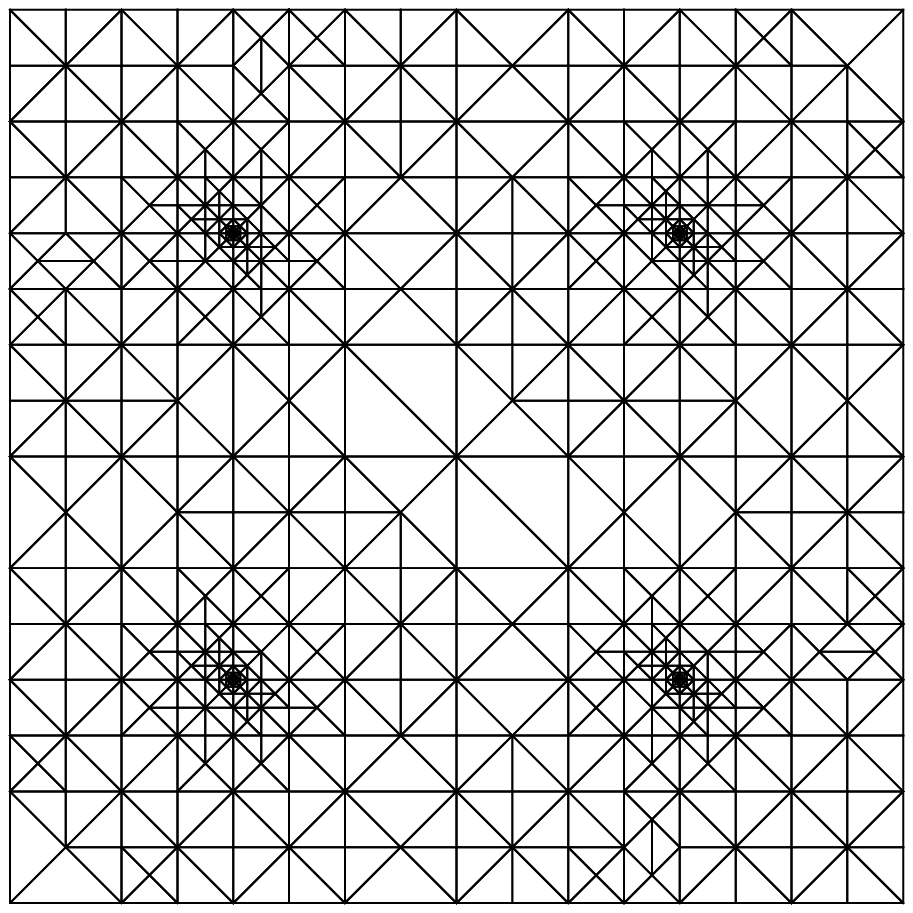}
\caption{\textbf{A series of Dirac sources:} Experimental rate of convergence for the error estimator $\mathscr{D}_{1.5}(\bu_{\T},\pe_{\T};\T)$ considering $\alpha=1.5$ (left) and the mesh obtained after 30 adaptive refinements (right). The mesh contains 1324 elements and 693 vertices.}
\label{fig:4points}
\end{figure}

\begin{figure}
\centering
\includegraphics[trim={0 0 0 0},clip,width=10cm,height=7.5cm,scale=0.8]{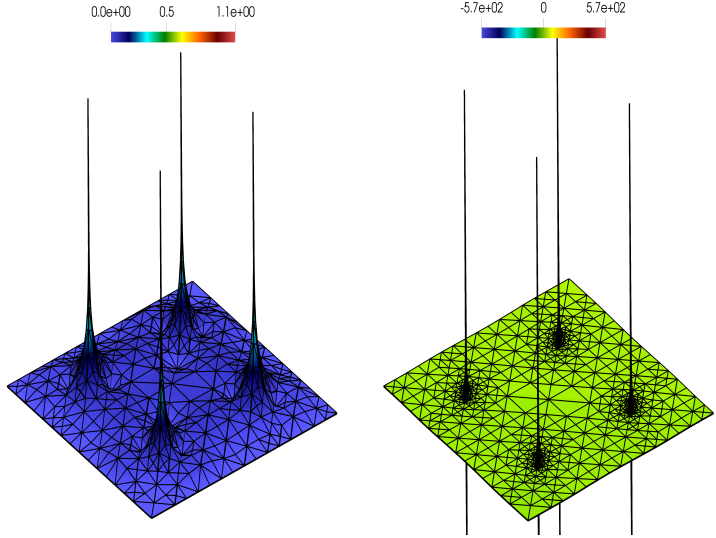}
\caption{\textbf{A series of Dirac sources:} Finite element approximations $|\bu_{\T}|$ (left) and $\pe_{\T}$ (right) over the mesh obtained after $30$ adaptive refinements with $\alpha = 1.5$. The mesh contains 1324 elements and 693 vertices.}
\label{fig:4pointFEM}
\end{figure}

%%%%%%%%%%%%%%%%%%%%%%%%%%%%
%%%%%%%%%%%%%%%%%%%%%%%%%%%%

\subsection{A convex domain with nonhomogeneous boundary conditions} 

We now explore the performance of our devised a posteriori error estimator by considering a problem with nonhomogeneous boundary conditions; a framework that does not fit in our analysis.

\subsubsection{A rectangular domain} We set $\Omega=(0,4)\times(0,1)$, $z=(0.5,0.5)^\intercal$ and $\bF=(10,10)^\intercal$. The boundary conditions are illustrated in the left panel of Figure \ref{fig:boundary_conditions}. We prescribe the parabolic Dirichlet inflow condition $\bu_{D}=(y(1-y),0)^\intercal$ on $\{ 0\} \times [0,1]$ and $\bu_{D} = \boldsymbol{0}$ on $[0,4] \times \{ 0 \} \cup  \{ 1 \}$ and the homogeneous Neumann condition $(\nabla\bu_{D}-p\mathbf{I})\boldsymbol{\nu}=\boldsymbol{0}$ on $\{4\} \times [0,1] $. Here,  $\boldsymbol{\nu}$ denotes the unit normal on $\partial \Omega$ pointing outwards. We recall that $\mathbf{I}$ denotes the identity matrix in $\mathbb{R}^{2 \times 2}$.

In the right panel of Figure \ref{fig:boundary_conditions} we observe an optimal decay rate for the devised error estimator $\E_{1.5}(\bu_{\T},\pe_{\T};\T)$. Finally, Figure~\ref{fig:nonhomogeneous_plot} shows the streamlines and velocity field for this problem.

\begin{figure}
  \begin{tabular}{lr}
    \begin{tikzpicture}[line cap=round,line join=round,>=triangle 45,x=0.5cm,y=0.5cm]
    \clip(-0.27,-1.29) rectangle (15,7);
    \draw (2,2)-- (12,2);
    \draw (12,2)-- (12,6);
    \draw (12,6)-- (2,6);
    \draw (2,6)-- (2,2);
    \draw (0.1,7) node[anchor=north west] {\rotatebox{90}{$\bu_{D}=(y(1-y),0)^\intercal$}};
    \draw (12.5,7) node[anchor=north west] {\rotatebox{90}{$(\nabla\bu_{D}-p\boldsymbol{I})\boldsymbol{\nu}=\boldsymbol{0}$}};
    \draw (5.5,2) node[anchor=north west] {$\bu_{D}=\boldsymbol{0}$};
    \draw (5.5,7) node[anchor=north west] {$\bu_{D}=\boldsymbol{0}$};
    \begin{scriptsize}
    \fill [color=black] (2,2) circle (1.5pt);
    \draw[color=black] (2,1.5) node {$(0,0)$};
    \fill [color=black] (12,2) circle (1.5pt);
    \draw[color=black] (12,1.5) node {$(4,0)$};
    \fill [color=black] (12,6) circle (1.5pt);
    \draw[color=black] (12,6.5) node {$(4,1)$};
    \fill [color=black] (2,6) circle (1.5pt);
    \draw[color=black] (2,6.5) node {$(0,1)$};
    \fill [color=black] (3,4) circle (1.5pt);
    \draw[color=black] (5,4) node {$z=(0.5,0.5)$};
    \end{scriptsize}
    \end{tikzpicture}
  &
    \psfrag{estimador 0.25}{$\E_{1.5}(\bu_{\T},\pe_{\T};\T)$}
    \psfrag{rate(h2)}{$\textrm{Ndof}^{-1}$}
    \includegraphics[trim={0 0 0 0},clip,width=4.5cm,height=4.5cm,scale=0.6]{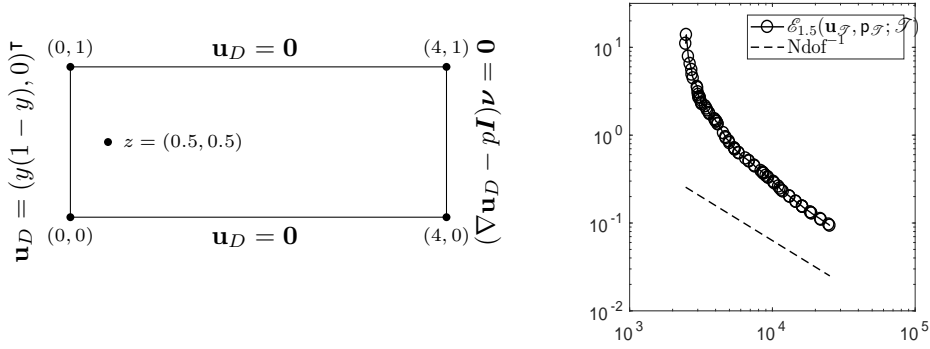}    
  \end{tabular}
\caption{\textbf{A rectangular domain:} Boundary conditions for problem \eqref{eq:NSEStrong} on $\Omega = (0,4) \times (0,1)$. We prescribe the parabolic Dirichlet inflow condition $\bu_{D}=(y(1-y),0)^\intercal$ on $\{ 0\} \times [0,1]$ and $\bu_{D} = \boldsymbol{0}$ on $[0,4] \times \{ 0 \} \cup  \{ 1 \}$ and the homogeneous Neumann condition $(\nabla\bu_{D}-p\mathbf{I})\boldsymbol{\nu}=\boldsymbol{0}$ on $\{4\} \times [0,1]$ (left). Experimental rate of convergence for the error estimator $\E_{\alpha}(\bu_{\T},\pe_{\T};\T)$ with $\alpha=1.5$ (right).}
\label{fig:boundary_conditions}
\end{figure}

\begin{figure}
\centering
\includegraphics[trim={0 0 0 0},clip,width=13cm,height=8cm,scale=0.8]{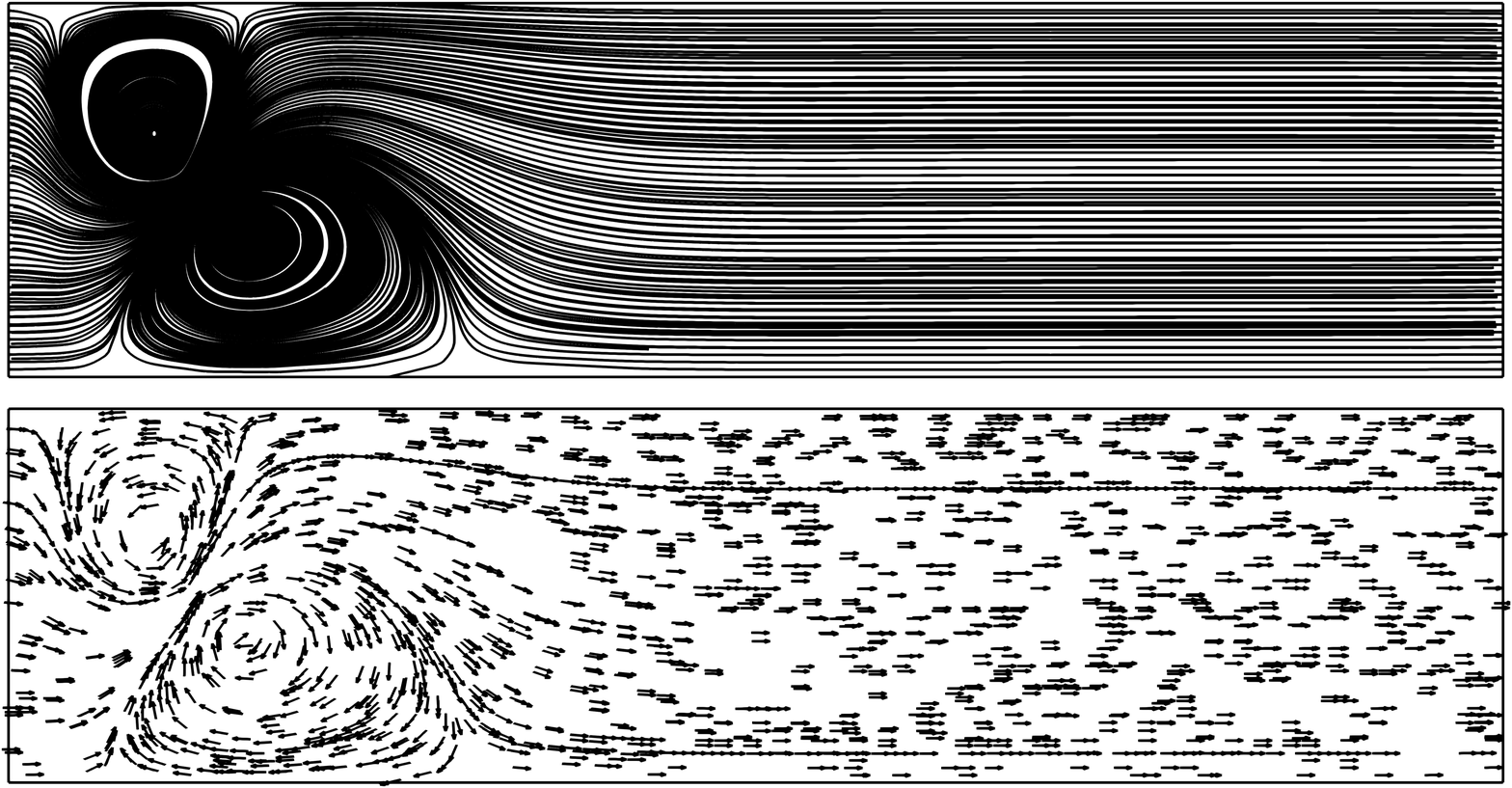}
\caption{\textbf{A rectangular domain:} Streamlines (top) and vector plot (bottom) associated to the velocity field $\bu_{\T}$ over the mesh, with 1485 elements and 781 vertices, that is obtained after 30 adaptive refinements. We have considered $\alpha=1.5$.}
\label{fig:nonhomogeneous_plot}
\end{figure}

\subsubsection{A rectangular domain with and obstacle} We set $\Omega = (0,10) \times (0,1)\setminus[2,4]\times[0.4,0.6]$ and $\bF_{z}=(10,10)^\intercal$ for $z\in\mathcal{Z}$, where $\mathcal{Z}=\{z_{i}\}_{i=1}^{4}$, with 
\begin{multline*}
z_{1}=(1.011635,0.198805)^\intercal,\quad
z_{2}=(1.011635,0.801195)^\intercal,
\\
z_{3}=(5.354725,0.200869)^\intercal,\quad
z_{4}=(5.264444,0.719518)^\intercal.
\end{multline*}
The boundary conditions are illustrated in the top panel of Figure \ref{fig:boundary_conditions_4points}. We prescribe the parabolic Dirichlet inflow condition $\bu_{D}=(y(1-y),0)^\intercal$ on $\{ 0\} \times [0,1]$ and $\bu_{D} = \boldsymbol{0}$ on $[0,10] \times \{ 0 \} \cup  \{ 1 \}$ and $\bu_{D} = \boldsymbol{0}$ on the boundary of $[4,6] \times [0.4,0.6]$, and the homogeneous Neumann condition $(\nabla\bu_{D}-p\mathbf{I})\boldsymbol{\nu}=\boldsymbol{0}$ on $\{10\} \times [0,1] $. Here,  $\boldsymbol{\nu}$ denotes the unit normal on $\partial \Omega$ pointing outwards. We recall again that $\mathbf{I}$ denotes the identity matrix in $\mathbb{R}^{2 \times 2}$. The bottom panel of Figure~\ref{fig:boundary_conditions_4points} shows the experimental rate of convergence for the error estimator $\mathscr{D}_{\alpha}(\bu_{\T},\pe_{\T};\T)$ with $\alpha=1.5$. The estimator exhibits an optimal rate of decay. Finally, Figure \ref{fig:final_obstacle} shows the streamlines, magnitude and vector plot of the velocity field and the mesh obtained after 100 iterations of our adaptive loop.

\begin{figure}
  \begin{tabular}{cc}
\begin{tikzpicture}[line cap=round,line join=round,>=triangle 45,x=1.0cm,y=1.4cm]
\clip(-0.8,-0.73) rectangle (10.76,1.83);
\draw (0,0)-- (10,0);
\draw (10,0)-- (10,1);
\draw (10,1)-- (0,1);
\draw (0,1)-- (0,0);
\draw (2,0.4)-- (4,0.4);
\draw (4,0.4)-- (4,0.6);
\draw (4,0.6)-- (2,0.6);
\draw (2,0.6)-- (2,0.4);
\draw (-0.8,1.5) node[anchor=north west] {\rotatebox{90}{$\bu_{D}=(y(1-y),0)^\intercal$}};
\draw (4.2,-0.08) node[anchor=north west] {$\bu_{D}=\boldsymbol{0}$};
\draw (4.2,1.4) node[anchor=north west] {$\bu_{D}=\boldsymbol{0}$};
\draw (10.28,1.4) node[anchor=north west] {\rotatebox{90}{$(\nabla\bu_{D}-p\boldsymbol{I})\boldsymbol{\nu}=\boldsymbol{0}$}};
\draw (2.3,0.68) node[anchor=north west] {$\bu_{D}=\boldsymbol{0}$};
\begin{scriptsize}
\fill [color=black] (0,0) circle (1.5pt);
\draw[color=black] (0.0,-0.2) node {$(0,0)$};
\fill [color=black] (10,0) circle (1.5pt);
\draw[color=black] (10,-0.2) node {$(10,0)$};
\fill [color=black] (10,1) circle (1.5pt);
\draw[color=black] (10,1.1) node {$(10,1)$};
\fill [color=black] (0,1) circle (1.5pt);
\draw[color=black] (0.0,1.1) node {$(0,1)$};
\fill [color=black] (2,0.4) circle (1.5pt);
\draw[color=black] (2,0.2) node {$(2,0.4)$};
\fill [color=black] (2,0.6) circle (1.5pt);
\draw[color=black] (2.05,0.69) node {};
\fill [color=black] (4,0.4) circle (1.5pt);
\draw[color=black] (4.06,0.49) node {};
\fill [color=black] (4,0.6) circle (1.5pt);
\draw[color=black] (4.06,0.69) node {$(4,0.6)$};
\fill [color=black] (0.82,0.19) circle (1.5pt);
\draw[color=black] (0.88,0.27) node {$z_{1}$};
\fill [color=black] (0.82,0.78) circle (1.5pt);
\draw[color=black] (0.87,0.65) node {$z_{2}$};
\fill [color=black] (5.28,0.19) circle (1.5pt);
\draw[color=black] (5.34,0.27) node {$z_{3}$};
\fill [color=black] (5.14,0.77) circle (1.5pt);
\draw[color=black] (5.19,0.65) node {$z_{4}$};
\end{scriptsize}
\end{tikzpicture}
  \\
    \psfrag{estimador 1.5}{$\mathscr{D}_{1.5}(\bu_{\T},\pe_{\T};\T)$}
    \psfrag{rate(h2)}{$\textrm{Ndof}^{-1}$}
    \includegraphics[trim={0 0 0 0},clip,width=4.5cm,height=4.5cm,scale=0.6]{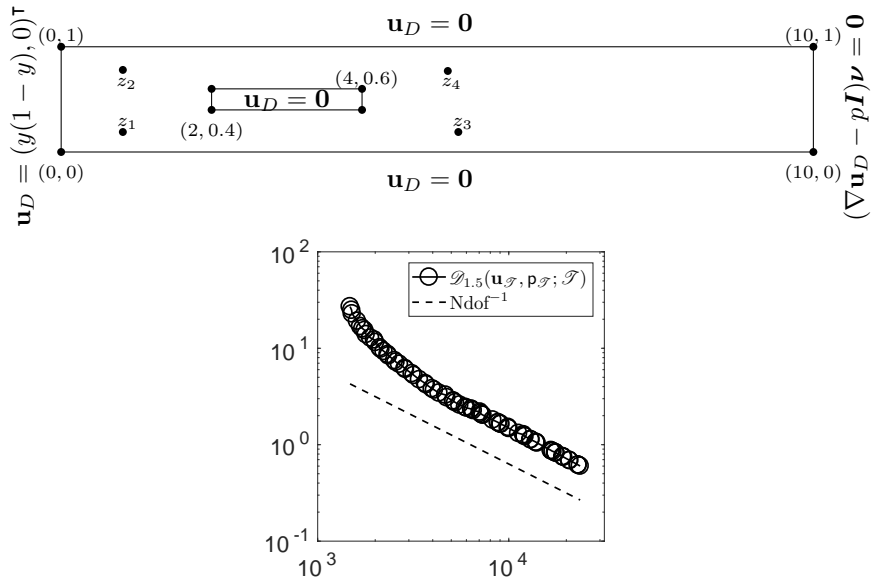}    
  \end{tabular}
\caption{\textbf{A rectangular domain with and obstacle:} Boundary conditions for problem \eqref{eq:NSEStrong} on $\Omega = (0,10) \times (0,1)\setminus[2,4]\times[0.4,0.6]$. We prescribe the parabolic Dirichlet inflow condition $\bu_{D}=(y(1-y),0)^\intercal$ on $\{ 0\} \times [0,1]$, $\bu_{D} = \boldsymbol{0}$ on $[0,10] \times \{ 0 \} \cup  \{ 1 \}$, $\bu_{D} = \boldsymbol{0}$ on the boundary of $[4,6] \times [0.4,0.6]$, and the homogeneous Neumann condition $(\nabla\bu_{D}-p\mathbf{I})\boldsymbol{\nu}=\boldsymbol{0}$ on $\{10\} \times [0,1] $ (top). Experimental rate of convergence for the error estimator $\mathscr{D}_{\alpha}(\bu_{\T},\pe_{\T};\T)$ with $\alpha=1.5$ (bottom).}
\label{fig:boundary_conditions_4points}
\end{figure}

\begin{figure}
\centering
\includegraphics[trim={0 0 0 0},clip,width=12.5cm,height=13cm,scale=0.8]{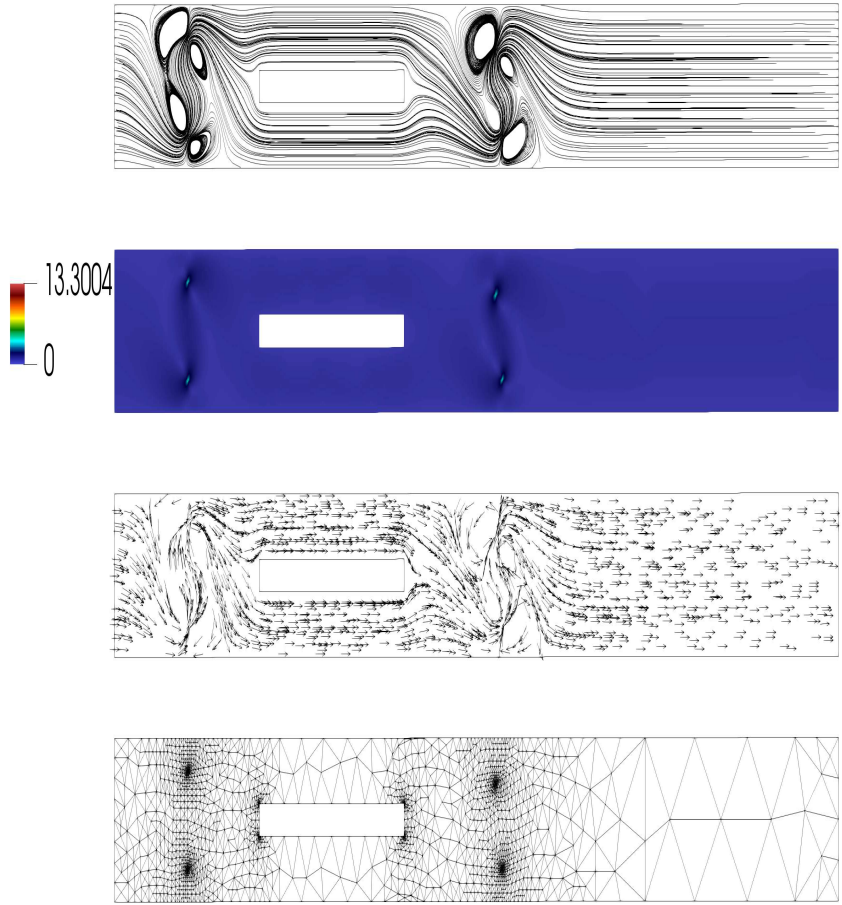}
\caption{\textbf{A rectangular domain with and obstacle.} Streamlines (first), magnitude of the flow field $\ue_\T$ (second), vector plot (third), and final mesh (last) after 100 adaptive refinements. The mesh consists of 5112 elements and 2679 vertices, where the adaptive cycles are obtained fixing $\alpha = 1.5$.}
\label{fig:final_obstacle}
\end{figure}

\bibliographystyle{siamplain}
\bibliography{biblio}
\end{document}